\documentclass[a4paper,6pt]{amsart}
\usepackage[T1]{fontenc}
\usepackage[utf8]{inputenc}
\usepackage{amssymb} 
\usepackage{amsthm}
\usepackage{amsmath}
\usepackage{mathrsfs}
\usepackage{graphicx}
\usepackage{tikz,tkz-tab}
\usepackage{url}
\usepackage{xcolor}
\usepackage{enumitem}
\usepackage{bbold}
\usepackage{amssymb,amsmath,graphicx,color}
\newcommand{\cRM}[1]{\MakeUppercase{\romannumeral #1}}

\newtheorem{theorem}{Theorem}[section]
\newtheorem{remarque}[theorem]{Remark}
\newtheorem{proposition}[theorem]{Proposition}
\newtheorem{corollaire}[theorem]{Corollary}
\newtheorem{lemme}[theorem]{Lemma}

\newtheorem{exmp}{Exemple}[section]

\title[Hypercyclicity and compactness in de Branges-Rovnyak spaces]{Hypercyclicity and compactness of co-analytic Toeplitz operators on de Branges-Rovnyak spaces}

\author[Alhajj]{Rim Alhajj}
 \address{Laboratoire Paul Painlev\'e, Universit\'e Lille 1, 59 655 Villeneuve d'Ascq C\'edex, France}
 \email{rim.alhajj@univ-lille.fr}
 
 \keywords{Toeplitz operators, de Branges-Rovnyak spaces, compactness, hypercyclicity}

\subjclass[2010]{30J05,30H10,46E22,47A16}

\thanks{The author would like to warmly thank  Emmanuel Fricain for his support and for his
valuable remarks and suggestions which improved the quality of this paper. The author
was supported by the Laboratory CEMPI (Centre Européen pour les Mathématiques, la Physique et leurs interactions).}

\begin{document}
\maketitle
\begin{abstract}
We study the compactness and the hypercyclicity of Toeplitz operators in the de Branges-Rovnyak spaces $\mathcal{ H}(b)$ with co-analytic and bounded symbols on $ \mathbb{D}$. We highlight the fundamental role played by the function $b$ generating the de Branges-Rovnyak space $ \mathcal{ H}(b) $. The characterization of compactness depends wether $b$ is inner function or not and the characterization of hypercyclicity depends wether the function  $\log (1 - |b|)$ is integrable or not.
\end{abstract}

\section{Introduction}

We shall mostly be discussing co-analytic Toeplitz operators $T_{ \bar{ \varphi}}$ with symbol $\bar{ \varphi}$  where $ \varphi \in H^{ \infty}$, that are naturally defined on the de Branges-Rovnyak space into itself. These operators have been introduced by Lotto-Sarason in \cite[Lemma 2.6]{MR1133377}, see also \cite[Section \cRM{2}.7]{MR1289670}. Although some special cases have long ago appeared in literature for $ \varphi \in L^{ \infty} ( \mathbb{T}) $, most notably as standard Toeplitz operators $ T_{  \varphi} :H^2 \rightarrow H^2 $ studied by A. Brown and P. Halmos in the paper \cite{MR0160136} and as the adjoints of truncated Toeplitz operators $A_{ \varphi}^{ \Theta}$ on model spaces $ K_{ \Theta}$ introduced by Sarason in \cite{MR2363975}.

It turns out that de Branges-Rovnyak spaces, which are a family of subspaces  $\mathcal{ H}(b)$ of the Hardy space $H^2$, parametrized by elements $b$ of the closed unit ball of $H^{ \infty}$ are invariant under $T_{ \bar{ \varphi}}$, where $\varphi \in H^{ \infty}$. We shall give the precise definition in section $2$. In general $\mathcal{ H}(b)$ is not closed in $H^2$, but it carries its own norm $||.||_{ \mathcal{ H}(b)} $ making it a Hilbert space. The spaces $ \mathcal{ H}(b)$ were introduced by de Branges and Rovnyak in the appendix of \cite{MR0244795} and further studied in their book \cite{MR0215065}.

The general theory of $\mathcal{ H}(b)$-spaces generally splits into two cases, according to whether $b$ is an extreme point or a non-extreme point of the unit ball of $H^{ \infty}$. The dichotomie $b$ extreme/non-extreme will also greatly appear in this paper. The general idea is that the extreme case has many features that are not far from the case of $b = \Theta$ inner (the classical model space $K_{ \Theta}$),while the non-extreme case has several properties that are similar to the case where $b = 0$ (the Hardy space $H^2$).

This paper treats two properties related to the restricted Toeplitz operators $ T_{ \bar{ \varphi}}$ on the de Branges-Rovnyak space $ \mathcal{ H}(b) $ where $ \varphi \in H^{ \infty}$. One of these properties is based on the particular operator $X_b = {T_{ \bar{z}}}_{| \mathcal H(b)}$ that plays a central role in the theory and particularly in the model develop by de Branges and Rovnyak. Indeed, it serves as a model for a large class of contractions (see \cite[Theorem 26.16]{MR3617311}). 


The first one concerns the compactness of the Toeplitz operator ${T_{\overline \varphi}}_{| \mathcal H(b)}$, 
More precisely, given an element $b$ of the closed unit ball of $ H^{ \infty}$, that is extreme or non-extreme, we are interested in finding a necessary and sufficient condition for the operator ${T_{\overline \varphi}}_{| \mathcal H(b)}$ to be compact. Let us mention that Brown and Halmos \cite{MR0160136} have shown that there is no compact Toeplitz operators $ T_{  \varphi} :H^2 \rightarrow H^2 $ with symbol $ \varphi \in  L^{ \infty} ( \mathbb{T})$ (except the trivial case where $\varphi = 0$). It is not surprising that in the case where $b$ is non-extreme, we reach a similar result for the restricted Toeplitz operator ${T_{\overline \varphi}}_{| \mathcal H(b)}$ with $ \varphi \in H^{ \infty}$.  

On the other hand, Ahern and Clark \cite{MR0264385} studied the compactness of truncated Toeplitz operators $A_{ \varphi}^{ \Theta}$, with continuous symbol $ \varphi$ on $ \mathbb{T}$ and they got a necessary and sufficient condition based on the image of the spectrum of $ \Theta$ intersected with the unit circle. Recently, Garcia, Ross and Wogen \cite{MR3203060} have found this result with another proof. Using their method, we show that when $b$ is an extreme point of the closed unit ball of $H^{ \infty}$ which is not inner, then the only compact Toeplitz operators ${T_{\overline \varphi}}_{| \mathcal H(b)} $, where $ \varphi \in H^{ \infty} \cap C( \mathbb{T}),$ corresponds to the trivial case $ \varphi = 0.$ Combining our two results, we see that when $b$ is not an inner fonction, there are no compact Toeplitz operators ${T_{\overline \varphi}}_{| \mathcal H(b)} $ with $\varphi \in H^{ \infty} \cap C( \mathbb{T})$ (expect when $\varphi = 0$).
 
Our second problem is related to the hypercyclicity of the restricted Toeplitz operator ${T_{\overline \varphi}}_{| \mathcal H(b)}$. Godefroy and Shapiro, using their Criterion \cite{MR1111569}, proved that for $ \varphi \in H^{ \infty}$, then $ T_{ \bar{ \varphi }} : H^2 \rightarrow H^2$ is hypercyclic if and only if $ \varphi$ is non constant and $ \varphi ( \mathbb{D}) \cap \mathbb{T} \neq \emptyset $. On the other hand, it is obvious that there are no hypercyclic Toeplitz operators with analytic symbols. For general symbols $ \varphi \in L^{ \infty} ( \mathbb{T})$, few results are known; see a recent paper of A.Baranov and A. Lishanskii \cite{MR3544864} and a paper of Shkarin who studied the case where $ \varphi (z) = a \bar{z} + b + cz.$ We succeed to extend the result of Godefroy and Shapiro in our context of $\mathcal{H}(b) $ spaces, when $ b $ is non extreme. Whereas, when $b$ is extreme, we give a necessary condition for the operator ${T_{\overline \varphi}}_{| \mathcal H(b)}$ to be non-hypercyclic. This condition is based on the point spectrum of the operator. It also turns out that this necessary condition is not sufficient.\\

The structure of the paper is the following. After a preliminary section with generalities about Branges-Rovnyak spaces, Toeplitz operators and definitions of hypercyclic and frequently hypercyclic operators, 
we discuss compactness properties on the restricted Toeplitz operators ${T_{\overline \varphi}}_{| \mathcal H(b)}$ in Section 3. The last section is dedicated to the hypercyclicity of ${T_{\overline \varphi}}_{| \mathcal H(b)}$.\\



\section{Preliminaries}

\subsection{Toeplitz operators and de Branges-Rovnyak spaces}\label{section 2.1}

We first recall some basic facts on Toeplitz operators on the Hardy space $ H^2$ of the open unit disc $ \mathbb{D}= \{  z \in \mathbb{C} : |z| < 1  \}$.

Given $\varphi \in L^{ \infty}( \mathbb{T}) =L^{ \infty}( \mathbb{T},m) $ where $\mathbb{T} = \partial{ \mathbb{D}} $ and $m$ is the normalized lebesgue measure on $\mathbb{T}$, the corresponding Toeplitz operator $T_{  \varphi} : H^2 \rightarrow H^2 $ is defined by
$$T_{  \varphi} f := P_+ (\varphi f ) \hspace{0.5 cm} (f \in H^2 ),$$
where $P_+ : L^2 ( \mathbb{T}) \rightarrow H^2$ denotes the othogonal projection of $L^2 ( \mathbb{T}) = L^2( \mathbb{T},m)$ onto $H^2$. Clearly $T_{  \varphi}$ is a bounded operator on $H^2$ with $ ||T_{  \varphi} || = ||  \varphi||_{L^{ \infty} ( \mathbb{T})}$, moreover it is compact if and only if   $\varphi = 0$ (Brown--Halmos, \cite{MR0160136}). If $ \varphi \in  H^{ \infty}$ the algebra of the analytic and bounded functions on $ \mathbb{D}$, then $T_{  \varphi}$ is simply the operator of multiplication by $ \varphi$ and its adjoint is $T_{  \bar{ \varphi}}$. Consequently, if $ \varphi, \psi \in H^{ \infty} $, then $ T_{  \bar{ \varphi}} T_{  \bar{ \psi}} = T_{  \bar{ \varphi} \bar{ \psi}} = T_{  \bar{ \psi}}T_{  \bar{ \varphi}}$.\\

If $ \varphi \in L^{ \infty} (\mathbb{T})$ satisfies $||\varphi||_{ \infty} \leq 1$, then, the corresponding Toeplitz operator $T_{ \varphi}$ is a contraction on the Hilbert space $H^2$. The associated de Branges-Rovnyak space $\mathcal H(T_{ \varphi} )$ is defined by
$$ \mathcal H(T_{ \varphi})= ( I - T_{ \varphi} T_{  \bar{ \varphi}})^{1/2}H^2.$$ 
For simplicity, we denote the complementary space $\mathcal H(T_{ \varphi} $) by $\mathcal H( \varphi)$ (see \cite[Section 17.3]{MR3617311}). Our main concern is when $ \varphi$ is a nonconstant analytic function in the closed unit ball of $H^{ \infty}$. In this case, by tradition, we use $b$ instead of $\varphi$. Therefore, the definition of an $\mathcal H(b)$-space uses the defect of the contraction $T_{b}$ \cite{MR3617311}. Hence, no doubt, the Toeplitz operators are extremely important in this context.\\ 

Here we recall an alternative and equivalent definition based on reproducing kernel. For every function $b$ in the closed unit ball of $ H^{ \infty}$, we associate the de Branges-Rovnyak space $\mathcal H(b)$ defined as the Hilbert space of analytic functions on $ \mathbb{D}$ whose reproducing kernel is given by
$$ k_{ \lambda}^b ( z) = \frac{1 - \overline{b( \lambda)}b(z)}{1 - \bar{ \lambda}z},\qquad \lambda,z\in\mathbb D.$$\\
That is,
 $$f( \lambda ) =  <f,k_{ \lambda}^b >_b, \forall f \in { \mathcal{H} }(b) , \forall \lambda \in \mathbb{D}.$$\\
For $ b = 0 $, we see that $ k_{ \lambda}^b$ coincides with $k_{ \lambda}$ the reproducing kernel of $ H^2$, given by $ k_{ \lambda} (z) = ( 1 - \bar{ \lambda} z )^{ -1}$, whence $ { \mathcal{H}}(0) = H^2$.\\
More generally when $ ||b||_{ \infty} < 1 $, then $\mathcal H(b)$ coincides with the Hardy space $H^2$ with an equivalent norm.\\
For $ b = \Theta$, with $\Theta$ an inner function (that is a function in the closed unit ball of $ H^{ \infty}$ such that $ | \Theta ( \zeta)|= 1 $  almost everywhere on $\mathbb T=\partial\mathbb D$), the space ${ \mathcal{H} }(\Theta)$ is a closed subspace of $ H^2$, and we have\\ 
 $$ { \mathcal{H} }(\Theta) =( \Theta  H^2 )^{ \perp}:=\{f\in H^2:\langle f,\Theta g\rangle_2=0,\forall g\in H^2\}.$$
The space ${ \mathcal{H} }(\Theta)$ is also called the model space denoted by $ K_{ \Theta} ={ \mathcal{H} }(\Theta) $. By Beurling's theorem, the space $K_{ \Theta}$ correspond to the lattice of closed, non trivial, invariant subspaces for the backward shift operator $ S^* = T_{ \bar{z}} $ on $ H^2$.\\
 
In the general case, the spaces $ \mathcal{ H }(b) $ are Hilbert spaces that are contained contractively in $ H^2 $.
Moreover, it's well-known that there are relations between the inner products of $ \mathcal{ H }(b) $ and its cousin $ \mathcal{ H }( \bar{b}) $ since these relations are special cases of the Lotto–Sarason theorem \cite[Theorem 16.18 and corollary 16.19]{MR3617311}. For further reference, we restate this result below.
\begin{theorem}[\cite{MR3617311}, Theorem 17.8]\label{thm 1.1}
Let $ f \in H ^2$. Then $ f \in \mathcal{ H }(b)$ if and only if $ T_{ \bar{b} }f \in \mathcal{ H }( \bar{b})$ and
$$    <f_1, f_2 >_b = <f_1,f_2>_2 + <T_{ \bar{b} }f_1,T_{ \bar{b} }f_2 >_{ \bar{b}} , \hspace{0.5cm} ( f_1, f_2 \in \mathcal{ H }(b)).          $$
\end{theorem}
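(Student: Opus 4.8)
The plan is to recognize the statement as the specialization to $A = T_b$ of the general complementation relation between a contraction and its adjoint, so that Theorem 16.18 and Corollary 16.19 apply. Concretely, I would set $A := T_b$, a contraction on $H^2$ with $A^* = T_{\bar b}$; then by the defining formulas $\mathcal H(b) = \mathcal H(T_b) = \operatorname{ran}\bigl((I - AA^*)^{1/2}\bigr)$ and $\mathcal H(\bar b) = \mathcal H(T_{\bar b}) = \operatorname{ran}\bigl((I - A^*A)^{1/2}\bigr)$, each carrying its range norm. Writing $D_* := (I - AA^*)^{1/2}$ and $D := (I - A^*A)^{1/2}$ for the two defect operators, I would record the intertwining identity $A^* D_* = D A^*$, which follows from $A^*(AA^*)^n = (A^*A)^n A^*$ together with the functional calculus. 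The whole theorem then amounts to the assertion that $f \in \mathcal H(A)$ if and only if $A^* f \in \mathcal H(A^*)$, together with the norm identity $\|f\|_b^2 = \|f\|_2^2 + \|A^* f\|_{\bar b}^2$, from which the stated inner-product formula follows by polarization.

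For the forward implication I would take $f \in \mathcal H(b)$ and write $f = D_* g$ with $g \perp \ker D_*$, so that $\|f\|_b = \|g\|_2$. The intertwining identity gives $A^* f = A^* D_* g = D(A^* g) \in \operatorname{ran} D = \mathcal H(\bar b)$. A short check shows $A^* g \perp \ker D$: if $Dh = 0$ then $A^*A h = h$, hence $AA^*(Ah) = Ah$, so $Ah \in \ker D_*$ and $\langle A^* g, h\rangle = \langle g, Ah\rangle = 0$. Consequently $\|A^* f\|_{\bar b} = \|A^* g\|_2$, and
$$\|f\|_2^2 + \|A^* f\|_{\bar b}^2 = \langle D_*^2 g, g\rangle + \langle AA^* g, g\rangle = \langle g, g\rangle = \|f\|_b^2,$$
which is exactly the desired identity.

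The harder direction is the converse, and here I would invoke the variational description of the complementary norm coming from the general theory, namely $\|f\|_b^2 = \sup_{g \in H^2}\bigl(\|f + Ag\|_2^2 - \|g\|_2^2\bigr)$, with $f \in \mathcal H(b)$ precisely when this supremum is finite. Assuming $A^* f \in \mathcal H(\bar b)$, write $A^* f = D y$ with $y \perp \ker D$. Expanding and using $\|Ag\|_2^2 - \|g\|_2^2 = -\|Dg\|_2^2$ and $\langle A^* f, g\rangle = \langle y, Dg\rangle$ gives
$$\|f + Ag\|_2^2 - \|g\|_2^2 = \|f\|_2^2 + 2\operatorname{Re}\langle y, Dg\rangle - \|Dg\|_2^2 = \|f\|_2^2 + \|y\|_2^2 - \|y - Dg\|_2^2 \le \|f\|_2^2 + \|y\|_2^2.$$
Since $y \perp \ker D = (\overline{\operatorname{ran} D})^{\perp}$, the vector $y$ lies in $\overline{\operatorname{ran} D}$, so $\inf_g \|y - Dg\|_2 = 0$ and the supremum equals $\|f\|_2^2 + \|A^* f\|_{\bar b}^2 < \infty$. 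Hence $f \in \mathcal H(b)$ and the norm identity holds once more.

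Finally, both sides of the claimed inner-product identity are sesquilinear forms on $\mathcal H(b)$ that agree on the diagonal by the norm identity just established, so they coincide by polarization; this also uses $T_{\bar b}f_1, T_{\bar b}f_2 \in \mathcal H(\bar b)$, guaranteed by the forward step. I expect the main obstacle to be the converse direction: everything reduces to the variational formula for $\|\cdot\|_b$, whose justification is the genuine analytic content (the Lotto–Sarason/complementation theorem), whereas the intertwining computation, the forward inequality, and the polarization step are routine.
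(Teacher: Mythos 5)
Your proposal is correct, but note that the paper does not actually prove this statement: it is quoted verbatim from Fricain--Mashreghi (Theorem 17.8), with the remark that it is a special case of the Lotto--Sarason complementation theorem (Theorem 16.18 and Corollary 16.19 of that book). What you have done is supply, in full, precisely the argument that citation encapsulates: you specialize the general relation between $\mathcal H(A)$ and $\mathcal H(A^*)$ for a contraction $A$ to $A = T_b$, $A^* = T_{\bar b}$. Your individual steps are sound: the intertwining $A^*D_* = DA^*$ via polynomial approximation of $t\mapsto\sqrt{t}$; the verification that $g\perp\ker D_*$ implies $A^*g\perp\ker D$ (using $\ker D = \ker D^2$), which is what legitimizes computing $\|A^*f\|_{\bar b}$ as $\|A^*g\|_2$; the identity $\|D_*g\|_2^2 + \|A^*g\|_2^2 = \|g\|_2^2$; and, in the converse, the completion-of-square computation $\|f+Ag\|_2^2 - \|g\|_2^2 = \|f\|_2^2 + \|y\|_2^2 - \|y - Dg\|_2^2$ together with $y\in\overline{\operatorname{ran}D}$, which evaluates the supremum exactly. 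The polarization step at the end is also fine, since both sides are sesquilinear on $\mathcal H(b)\times\mathcal H(b)$ once the forward step guarantees $T_{\bar b}f_i\in\mathcal H(\bar b)$. The one ingredient you assume rather than prove is the variational characterization $\|f\|_b^2 = \sup_{g\in H^2}\bigl(\|f+Ag\|_2^2 - \|g\|_2^2\bigr)$ with membership in $\mathcal H(b)$ equivalent to finiteness of the supremum; this is indeed a foundational fact of the de Branges--Rovnyak theory (it is essentially how the complementary space is developed in Sarason's and Fricain--Mashreghi's treatments), and you flag this dependence honestly. So the comparison comes down to this: the paper buys the result with a citation, while your route makes the content of that citation explicit and self-contained at the cost of importing one standard structural fact; as a proof it is complete and faithful to the source the paper relies on.
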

It is now a well-known fact that the general theory of $\mathcal{ H }(b)$-spaces splits into two cases, according to whether $b$ is an extreme point or a non-extreme point of the unit ball of $ H^{ \infty}$ (recall that, according to De Leew-Rudin's Theorem, $b$ is a non-extreme point of the closed unit ball of $ H^{ \infty}$ if and only if $ \log (1 - | b |) \in L^1 ( \mathbb{T})$, in particular every inner function $b = \Theta$ is an extreme point).

For example, $\mathcal{ H }(b)$ contains all the reproducing kernels of the Hardy space $H^2 $ if and only if $b$ is a non extreme point of the closed unit ball of $H^{ \infty}$ (see \cite[ Theorem 23.23 and corollary 25.8]{MR3617311}).
\\

Furthermore from the above characterization of a non-extreme point it follows that, if $b$ is non-extreme, then there is an outer function $a$ such that $a(0) > 0$ and $|a|^2 + |b|^2 = 1 \hspace{0.1cm} a.e. \hspace{0.1cm} on \hspace{0.1cm} \mathbb{T}$ \cite{MR1289670}. The function $a$ is uniquely determined by $b$. We shall call $(b, a)$ a pair.The following result gives a useful characterization of $ \mathcal{H}(b)$ in this case.
\begin{theorem}[\cite{MR3617311}, Theorem 23.8]\label{thm 1.2}
Let $b$ be a non-extreme point of the closed unit ball of $ H^{ \infty}$, let $(a,b)$ be a pair and let $f \in H^2$. Then $f  \in { \mathcal{H}}(b)$ if and only if $T_{ \bar{b}} f  \in T_{ \bar{a}} (H^2 )$. In this case, there exists a unique function $f^+  \in H^2$ such that $T_{ \bar{b}}  f = T_{ \bar{a}} f^+$, and
$$ || f||_b^2 = ||f ||_2^2 + ||  f^+||_2^2 .$$
In particular, for $f,g \in \mathcal{H}(b)$ there exists a unique $ f^+ , g^+ \in H^2$ such that\\
$$  <f,g>_b = <f,g>_2 + <f ^+, g^+>_2.   $$
\end{theorem}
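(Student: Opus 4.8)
The plan is to deduce Theorem~\ref{thm 1.2} from the Lotto--Sarason identity of Theorem~\ref{thm 1.1} by identifying the cousin space $\mathcal{H}(\bar b)$ with the operator range $T_{\bar a}(H^2)$ when $b$ is non-extreme. The algebraic starting point is the identity $I - T_{\bar b}T_b = T_{\bar a}T_a$. Indeed, since $a,b \in H^{\infty}$, the Brown--Halmos multiplication rules give $T_{\bar b}T_b = T_{|b|^2}$ and $T_{\bar a}T_a = T_{|a|^2}$, so that $I - T_{\bar b}T_b = T_{1-|b|^2} = T_{|a|^2} = T_{\bar a}T_a$, where the middle equality uses the defining relation $|a|^2 + |b|^2 = 1$ a.e.\ on $\mathbb{T}$ for the pair $(a,b)$.

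For a bounded operator $S$, write $\mathcal{M}(S) := S(H^2)$ for the operator range equipped with its range norm (the norm for which $S$ acts as a coisometry from $(\ker S)^{\perp}$). By definition $\mathcal{H}(\bar b) = \mathcal{H}(T_{\bar b}) = (I - T_{\bar b}T_b)^{1/2}H^2 = \mathcal{M}\big((T_{\bar a}T_a)^{1/2}\big)$. I would then invoke the standard operator-range theorem (Douglas' factorization lemma): for bounded operators $S,A$ one has $\operatorname{ran}(S) = \operatorname{ran}(A)$ with identical range norms as soon as $SS^{*} = AA^{*}$. Applying this with $S = T_{\bar a}$ and $A = (T_{\bar a}T_a)^{1/2}$ — both satisfy $SS^{*} = AA^{*} = T_{\bar a}T_a$, since $(T_{\bar a})^{*} = T_a$ — yields the isometric identification $\mathcal{H}(\bar b) = \mathcal{M}(T_{\bar a}) = T_{\bar a}(H^2)$. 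Moreover, because $a$ is outer, $aH^2$ is dense in $H^2$, whence $\ker T_{\bar a} = (aH^2)^{\perp} = \{0\}$; thus $T_{\bar a}$ is injective and the range norm is simply $\|T_{\bar a}h\|_{\bar b} = \|h\|_2$ for every $h \in H^2$.

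With this dictionary the theorem follows quickly. By Theorem~\ref{thm 1.1}, $f \in \mathcal{H}(b)$ if and only if $T_{\bar b}f \in \mathcal{H}(\bar b)$, which we have just shown equals $T_{\bar a}(H^2)$; this is the first equivalence. When it holds, injectivity of $T_{\bar a}$ furnishes a unique $f^{+} \in H^2$ with $T_{\bar b}f = T_{\bar a}f^{+}$, and the isometry of the identification gives $\|T_{\bar b}f\|_{\bar b} = \|T_{\bar a}f^{+}\|_{\bar b} = \|f^{+}\|_2$. Substituting into the norm formula $\|f\|_b^2 = \|f\|_2^2 + \|T_{\bar b}f\|_{\bar b}^2$ of Theorem~\ref{thm 1.1} yields $\|f\|_b^2 = \|f\|_2^2 + \|f^{+}\|_2^2$. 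The inner-product identity for $f,g \in \mathcal{H}(b)$ then follows by polarization, using the linearity of the map $f \mapsto f^{+}$, which is itself a consequence of uniqueness.

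I expect the main obstacle to be the isometric — not merely set-theoretic — identification $\mathcal{H}(\bar b) = T_{\bar a}(H^2)$: the equality of the two underlying ranges is the easy half, while matching the range norms \emph{exactly} (so that no extraneous constant intrudes in $\|f\|_b^2 = \|f\|_2^2 + \|f^{+}\|_2^2$) is precisely what forces the sharp form $SS^{*} = AA^{*}$ of Douglas' lemma rather than a mere inclusion of ranges. The verification that $a$ outer makes $T_{\bar a}$ injective is the other delicate point, since it underpins both the uniqueness of $f^{+}$ and the clean form of the range norm.
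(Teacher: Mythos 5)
Your proof is correct. Note that the paper offers no proof of this statement at all: it is quoted as background from \cite{MR3617311} (Theorem 23.8), so there is no in-paper argument to compare against. Your route — feeding the identity $I-T_{\bar b}T_b=T_{\bar a}T_a$ into the sharp ($SS^{*}=AA^{*}$) form of Douglas' lemma to get the isometric identification $\mathcal{H}(\bar b)=T_{\bar a}(H^2)$, using injectivity of $T_{\bar a}$ (from $a$ outer) for uniqueness of $f^{+}$, and then concluding via Theorem \ref{thm 1.1} and polarization — is exactly the standard proof given in that reference, and all the steps you flag as delicate are handled correctly.
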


An important operator in the theory of model spaces is the compression of Toeplitz operators on $ K_{ \Theta} $: for $ \varphi \in L ^{ \infty} $ and $ \Theta $ an inner function, one defines the truncated Toeplitz operator $ A_{ \varphi} ^ {\Theta} $ by
$$
 \begin{array}{ccccc}
 A_{ \varphi}^{ \Theta} & : & K_{ \Theta} & \to & K_{ \Theta} \\
 & & f & \mapsto &   A_{ \varphi}^{ \Theta}f = \varphi ( M_{ \Theta})(f)  = { \mathbf{P}}_{ \Theta}(T_{ \varphi}f),
\end{array}
$$
with ${ \mathbf{P}}_{ \Theta}$ the orthogonal projection of $ H^2$ to $ K_{ \Theta}$. It turns out that when $\varphi$ is in $H^\infty$, then $K_\Theta$ is invariant by $T_{\overline{\varphi}}$ and the adjoint of the truncated Toeplitz operator with symbol $\varphi$ is  $(A_{\varphi}^\Theta)^*= T_{\overline{\varphi}}|K_\Theta$ \cite[ Section 14.7]{MR3617311}. More generally, for $ \varphi \in H^{ \infty } $, ${ \mathcal{H} }(b)$ is invariant by $ T_{\bar{ \varphi}}$  and $||T_{\bar{ \varphi}}||_{L( { \mathcal{H} }(b) ) } \leq || \varphi||_{ \infty}  $ \cite{MR3617311}. In particular, ${ \mathcal{H} }(b)$ is invariant by the backward shift $ S^* = T_{ \bar{z}}$.\\
 
Ahern and Clark \cite{MR0264385} have given a necessary and sufficient condition for the truncated Toeplitz operator $ A_{ \varphi} ^ {\Theta} $ to be compact, when the symbol $ \varphi$ is continuous on the boundary. See also an alternative proof by Garcia-Ross-Wogen in \cite{MR3203060}. The characterization of Ahern-Clark involves the notion of the spectrum of an inner function.

Recall that the spectrum of a function $b$ in the closed unit ball of $ H^{ \infty}$ \cite[Section 5.2 and 22.6]{MR3617311}, denoted by $ \sigma (b)$ is defined as follows\\
$$  \sigma(b)= \left\lbrace  \zeta \in \overline{ \mathbb{D}} : \lim\limits_{ \substack{ z \rightarrow \zeta \\ z \in \mathbb{D}} } \inf | b(z)| < 1 \right\rbrace . $$

A generalization of Livsic-Moeller's result shows that $b$ and every element in ${ \mathcal{H} }(b)$ can be analytically continued accross any arc $I \subset \mathbb{T} \setminus clos( \sigma (b))$, and $|b| = 1$ on $I$ \cite[Theorem 20.13]{MR3617311}.\\

In particular if $b = \Theta$ is a non constant inner function, and since $\Theta$ is unimodular a.e. on $\mathbb{T}$ then\\
$$ \sigma( \Theta ) = \left\lbrace \zeta \in \overline{ \mathbb{D}} : \lim\limits_{ \substack{ z \rightarrow \zeta \\ z \in \mathbb{D}} } \inf | \Theta(z)| = 0 \right\rbrace = clos (Z( \Theta)) \cup supp( \nu),$$\\
where  $Z( \Theta)= \{ \lambda \in \mathbb{D} : \Theta ( \lambda) = 0  \}$ and $\nu$ is the measure representing the singular part of $\Theta$.\\

Now Ahern and Clark's result says: 
\begin{theorem}[Ahern-Clark, \cite{MR0264385}]
Let $ \varphi \in C( \mathbb{T})$, then $A_{ \varphi}^{ \Theta}$ is compact if and only if $ \varphi_{ | { { \sigma}( \Theta) \cap \mathbb{T} } } = 0 .  $
\end{theorem}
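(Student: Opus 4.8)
The plan is to prove both implications directly from the definition of compactness, using the reproducing kernels of $K_\Theta$ for the necessity and the Livsic--Moeller analytic continuation (Theorem 20.13 above) for the sufficiency. Throughout I write $k_z^\Theta$ for the reproducing kernel of $K_\Theta$ at $z\in\mathbb D$, so that $A_\varphi^\Theta f=\mathbf P_\Theta(\varphi f)$ for $f\in K_\Theta$ (since $\mathbf P_\Theta P_+=\mathbf P_\Theta$), and I recall $\|k_z^\Theta\|^2=k_z^\Theta(z)=(1-|\Theta(z)|^2)/(1-|z|^2)$.

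\emph{Necessity.} Suppose $A_\varphi^\Theta$ is compact and fix $\zeta\in\sigma(\Theta)\cap\mathbb T$. Since $\Theta$ is inner, $\zeta\in\sigma(\Theta)$ means $\liminf_{z\to\zeta}|\Theta(z)|=0$, so I choose $z_n\in\mathbb D$ with $z_n\to\zeta$ and $\Theta(z_n)\to0$. First I would check that the normalized kernels $\hat k_{z_n}^\Theta:=k_{z_n}^\Theta/\|k_{z_n}^\Theta\|$ tend to $0$ weakly: indeed $\|k_{z_n}^\Theta\|\to\infty$ while $\langle k_\mu^\Theta,\hat k_{z_n}^\Theta\rangle=k_\mu^\Theta(z_n)/\|k_{z_n}^\Theta\|\to0$ for every fixed $\mu\in\mathbb D$, and the kernels span a dense subspace. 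Compactness then gives $\|A_\varphi^\Theta\hat k_{z_n}^\Theta\|\to0$, hence $\langle A_\varphi^\Theta\hat k_{z_n}^\Theta,\hat k_{z_n}^\Theta\rangle\to0$. On the other hand, since $k_{z_n}^\Theta\in K_\Theta$,
\[
\langle A_\varphi^\Theta\hat k_{z_n}^\Theta,\hat k_{z_n}^\Theta\rangle=\int_{\mathbb T}\varphi\,\frac{|k_{z_n}^\Theta|^2}{\|k_{z_n}^\Theta\|^2}\,dm=\int_{\mathbb T}\varphi\,w_n\,P_{z_n}\,dm,
\]
where $P_{z_n}$ is the Poisson kernel at $z_n$ and $w_n=|1-\overline{\Theta(z_n)}\Theta|^2/(1-|\Theta(z_n)|^2)\to1$ uniformly on $\mathbb T$ because $\Theta(z_n)\to0$ and $|\Theta|=1$ a.e. As $z_n\to\zeta$ the probability measures $P_{z_n}\,dm$ converge weak-$*$ to $\delta_\zeta$, so the right-hand side tends to $\varphi(\zeta)$. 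Comparing the two limits yields $\varphi(\zeta)=0$, and since $\zeta$ was arbitrary this gives $\varphi|_{\sigma(\Theta)\cap\mathbb T}=0$.

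\emph{Sufficiency.} Conversely assume $\varphi|_{\sigma(\Theta)\cap\mathbb T}=0$. To prove $A_\varphi^\Theta$ compact it suffices to show $\|A_\varphi^\Theta f_n\|\to0$ whenever $f_n\to0$ weakly in $K_\Theta$ with $\|f_n\|\le M$. Since $\|A_\varphi^\Theta f_n\|\le\|\mathbf P_\Theta(\varphi f_n)\|_2\le\|\varphi f_n\|_{L^2}$, it is enough to bound $\int_{\mathbb T}|\varphi|^2|f_n|^2\,dm$. Given $\epsilon>0$, set $V_\epsilon=\{|\varphi|<\epsilon\}$, an open neighbourhood of the compact set $K:=\sigma(\Theta)\cap\mathbb T$ on which $\varphi$ vanishes; then $\mathbb T\setminus V_\epsilon$ is a compact subset of $\mathbb T\setminus\sigma(\Theta)$ (note $\sigma(\Theta)$ is closed, so this equals $\mathbb T\setminus\mathrm{clos}(\sigma(\Theta))$), hence is covered by finitely many arcs across which, by the Livsic--Moeller continuation, every element of $K_\Theta$ extends analytically. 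I would argue that $f_n\to0$ uniformly on $\mathbb T\setminus V_\epsilon$ (see below). Splitting the integral,
\[
\int_{\mathbb T}|\varphi|^2|f_n|^2\,dm\le\epsilon^2\int_{V_\epsilon}|f_n|^2\,dm+\|\varphi\|_\infty^2\Big(\sup_{\mathbb T\setminus V_\epsilon}|f_n|\Big)^2\le\epsilon^2M^2+\|\varphi\|_\infty^2\Big(\sup_{\mathbb T\setminus V_\epsilon}|f_n|\Big)^2,
\]
using $\|f_n\|_2=\|f_n\|_{K_\Theta}\le M$ on the first term. Letting $n\to\infty$ and then $\epsilon\to0$ forces $\int_{\mathbb T}|\varphi|^2|f_n|^2\,dm\to0$, so $A_\varphi^\Theta$ is compact.

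\emph{Main obstacle.} The delicate point is the claimed uniform convergence $f_n\to0$ on compact subsets of $\mathbb T\setminus\sigma(\Theta)$. Weak nullity already gives $f_n\to0$ locally uniformly on $\mathbb D$ (via $|f_n(z)|\le M\|k_z^\Theta\|$ and Montel's theorem). To push this to a boundary arc $J\subset\mathbb T\setminus\sigma(\Theta)$ I would first note that $\|k_z^\Theta\|^2=(1-|\Theta(z)|^2)/(1-|z|^2)$ stays bounded as $z\to J$, because $\Theta$ is analytic and unimodular on a neighbourhood of $J$, whence $1-|\Theta(z)|^2=O(1-|z|^2)$ there; thus $\{f_n\}$ is uniformly bounded as one approaches $J$ from inside $\mathbb D$. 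Combining this with the conjugation $Cf=\Theta\bar z\overline f$ on $K_\Theta$ (an antiunitary that preserves weak nullity and encodes the reflection of $f_n$ across $J$) one obtains a uniform bound for $\{f_n\}$ on a full two-sided neighbourhood of $J$; Montel together with the pointwise convergence inside $\mathbb D$ then yields locally uniform convergence to $0$ up to and across $J$. Making this uniform boundedness rigorous --- essentially the quantitative form of the Livsic--Moeller continuation --- is the technical heart of the sufficiency, whereas the necessity reduces entirely to the weak-$*$ concentration of the Poisson kernels.
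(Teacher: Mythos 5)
Your overall strategy coincides with the paper's: the paper does not reprove Ahern--Clark itself, but its Theorem~\ref{thm 2.2} generalizes it (to extreme $b$, for $T_{\overline\varphi}:\mathcal H(b)\to H^2$) using exactly the Garcia--Ross--Wogen scheme you follow --- weakly null normalized reproducing kernels for necessity, analytic continuation across $\mathbb T\setminus\mathrm{clos}(\sigma(b))$ for sufficiency. Your necessity direction is complete and correct, and in the inner case it is in fact cleaner than the paper's version: since you may take $\Theta(z_n)\to 0$, you can factor $|\hat k_{z_n}^\Theta|^2 = w_n P_{z_n}$ with $w_n\to 1$ uniformly and invoke weak-$*$ convergence of the Poisson measures to $\delta_\zeta$, whereas the paper (having only $|b(\lambda_n)|\to c<1$ at its disposal) must carry out the concentration estimate by hand, splitting $\int(\varphi(\zeta)-\varphi)F_{\lambda_n}\,dt$ into the pieces $|t-\alpha|\le\delta$ and $|t-\alpha|\ge\delta$ and bounding $\int F_{\lambda_n}\,dt$ from below.

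The sufficiency is where your write-up has a genuine hole, and you flag it yourself: the uniform convergence $f_n\to 0$ on $\mathbb T\setminus V_\epsilon$. Your reflection-plus-Montel sketch can be pushed through, but it is far more machinery than the step requires, and the paper's proof shows how to bypass it entirely: for $\zeta$ in a compact subset $K'$ of $\mathbb T\setminus\mathrm{clos}(\sigma(\Theta))$, the \emph{boundary} reproducing kernel $k_\zeta^\Theta$ lies in $K_\Theta$ (see \cite[Theorem 21.1]{MR3617311}), with $\|k_\zeta^\Theta\|^2=|\Theta'(\zeta)|$ bounded uniformly on $K'$ because $\Theta$ is analytic in a neighbourhood of $K'$. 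Weak nullity then gives the pointwise convergence $f_n(\zeta)=\langle f_n,k_\zeta^\Theta\rangle\to 0$ for every $\zeta\in K'$, together with the uniform bound $|f_n(\zeta)|\le M\sup_{K'}\|k_\zeta^\Theta\|$, and the dominated convergence theorem kills $\int_{\mathbb T\setminus V_\epsilon}|\varphi|^2|f_n|^2\,dm$ --- no uniform convergence, no conjugation, no Montel. (The paper also inserts a preliminary approximation of $\varphi$ by a $\psi\in C(\mathbb T)$ vanishing on an open neighbourhood of $\mathrm{clos}(\sigma(b)\cap\mathbb T)$ and proves compactness for $T_{\overline\psi}$; your splitting over $V_\epsilon=\{|\varphi|<\epsilon\}$ is a legitimate substitute for that step.) So: necessity correct, sufficiency structurally right but incomplete as written, and the missing ``technical heart'' dissolves once you replace uniform convergence by pointwise convergence plus a uniform bound evaluated against boundary kernels.
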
 
\subsection{Hypercyclic and frequently hypercyclic operators}

Let $X$ be a complex infinite-dimensional separable Banach space. An operator $ T \in L(X)$ is said to be hypercyclic if there is some vector $x \in X$ such that the orbit
$$ O(x,T) := \{ T^n(x); n \in \mathbb{N}    \}$$ 
is dense in $X$. Such a vector $x$ is said to be hypecyclic for $T$, and the set of all hypercyclic vectors for $T$ is denoted by $HC(T)$.\\

Moreover we say that $T$ is frequently hypercyclic, if there exists a vector $x \in X$ such that for every non-empty open subset $U$ of $X$, the set $ N(x,U) = \{ n \geq 0 ;T^n(x) \in U \}$ of instants when the iterates of $x$ under $T$ visit $U$ has positive lower density, i.e. 
$$ \underline{dens}( N(x,U) )=  \lim\limits_{N \rightarrow \infty} \inf \frac{card ( N(x,U) \cap [ 1, N]) }{N} > 0.           $$

We refer the reader to the recent book \cite{MR2533318} for more information on these topics.

Frequent hypercyclicity is a much stronger notion than hypercyclicity, and some operators are hypercyclic without being frequently hypercyclic: an example is the Bergman backward shift \cite{MR2159459}. \\
 
Let us complete this section by recalling two criterions for hypercyclicity and frequent hypercyclicity that we will use to study the hypercyclicity properties of the Toeplitz operator ${T_{\overline \varphi}}_{| \mathcal H(b)}$.\\

We start with the Godefroy-Shapiro Criterion \cite{ MR1111569}, according to which a bounded operator having a large supply of eigenvectors associated to eigenvalues of modulus strictly larger than 1 and strictly smaller than 1 is hypercyclic.\\
\begin{theorem}[Godefroy-Shapiro Criterion, \cite{ MR1111569}]\label{thm 2.4}
Let $ T \in L(X)$ . Suppose that $  \bigcup_{ | \lambda | < 1} Ker( T - \lambda ) $ and $  \bigcup_{ | \lambda | > 1} Ker( T - \lambda )$ both span a dense subspace of $X$. Then $T$ is hypercyclic.\\
\end{theorem}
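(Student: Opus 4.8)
The plan is to reduce hypercyclicity to topological transitivity via Birkhoff's transitivity theorem, and then to establish transitivity by playing the two families of eigenvectors against each other. Write $X_+ := \mathrm{span}\bigcup_{|\lambda|<1}\ker(T-\lambda)$ and $X_- := \mathrm{span}\bigcup_{|\mu|>1}\ker(T-\mu)$; by hypothesis both are dense in $X$. The first observation is that $T^n u \to 0$ for every $u \in X_+$: indeed such a $u$ is a finite combination $u = \sum_j c_j u_j$ of eigenvectors with $Tu_j = \lambda_j u_j$ and $|\lambda_j|<1$, so $T^n u = \sum_j c_j \lambda_j^{\,n} u_j \to 0$ term by term.

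Next I would manufacture a right inverse of $T$ on $X_-$. Since eigenvectors attached to distinct eigenvalues are linearly independent, $X_-$ is the algebraic direct sum of the eigenspaces $\ker(T-\mu)$ with $|\mu|>1$, so I may define $S$ on each such eigenvector $v$ (with $Tv = \mu v$) by $Sv := \mu^{-1} v$ and extend it linearly; this is well defined. By construction $TS = I$ on $X_-$, and for $v = \sum_j d_j v_j \in X_-$ one has $S^n v = \sum_j d_j \mu_j^{-n} v_j \to 0$ because $|\mu_j|^{-1}<1$, while $T^n S^n v = v$ holds exactly.

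With these two facts transitivity is immediate. Recall that on a separable Banach space an operator is hypercyclic if and only if it is topologically transitive, i.e. for every pair of non-empty open sets $U,V$ there is an $n$ with $T^n(U)\cap V \neq \emptyset$ (Birkhoff). Given such $U,V$, density lets me pick $u \in U \cap X_+$ and $v \in V \cap X_-$ together with $\varepsilon>0$ such that $B(u,\varepsilon)\subset U$ and $B(v,\varepsilon)\subset V$. Set $w_n := u + S^n v$. Since $S^n v \to 0$ we have $w_n \in U$ for $n$ large, and since $T^n w_n = T^n u + T^n S^n v = T^n u + v$ with $T^n u \to 0$, we also have $T^n w_n \in V$ for $n$ large. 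Choosing one index $n$ large enough for both gives $T^n(U)\cap V \neq \emptyset$, so $T$ is transitive and hence hypercyclic.

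The computations are routine; the only point deserving care is the selection of a single index $n$ that validates both approximations $w_n\approx u$ and $T^n w_n \approx v$ at once. This succeeds precisely because the strict inequalities $|\lambda|<1$ and $|\mu|>1$ force both $T^n u \to 0$ and $S^n v \to 0$, i.e. it is exactly the separation of the eigenvalue moduli from the unit circle that drives the argument. Everything else is formal.
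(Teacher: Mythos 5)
Your proof is correct. Note that the paper itself offers no proof of this statement: it is quoted as Theorem 2.4 directly from Godefroy--Shapiro \cite{MR1111569}, so there is nothing internal to compare against. Your argument is essentially the classical one, with one presentational difference worth naming: the textbook route (and the original paper) first establishes the Hypercyclicity Criterion --- given dense sets $X_+$, $X_-$, maps $S_n$ with $T^n x \to 0$ on $X_+$, $S_n y \to 0$ on $X_-$ and $T^n S_n y \to y$ --- as a standalone theorem via Baire category, and then deduces the eigenvalue criterion by exactly your choice of data ($T^n$ on the span of the small eigenvectors, $S^n$ with $Sv = \mu^{-1}v$ on the span of the large ones). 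You instead inline that machinery: you verify topological transitivity directly with the perturbation $w_n = u + S^n v$ and invoke Birkhoff's transitivity theorem to conclude hypercyclicity. This buys brevity and makes the mechanism transparent (the strict separation of the eigenvalue moduli from the circle is visibly what makes both $T^n u$ and $S^n v$ collapse), at the cost of hiding the Baire-category content inside the citation of Birkhoff's theorem and of not isolating the reusable criterion. Your treatment of the only delicate points --- well-definedness of $S$ via the direct-sum decomposition of $\mathrm{span}\bigcup_{|\mu|>1}\ker(T-\mu)$, the identity $T^nS^nv=v$, and the selection of a single index $n$ serving both approximations --- is accurate.
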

Then it was shown by S.Grivaux that an operator $T$ which has "sufficiently many" eigenvectors associated to eigenvalues of modulus $1$ in the sense that these eigenvectors are perfectly spanning, then $T$ is automatically frequently hypercyclic.\\
\begin{theorem}[S.Grivaux, \cite{MR2975340}]\label{thm 2.5}
Let $T$ be a bounded operator on $X$. Suppose that there exists a sequence $(u_i)_{i \geq 1}$ of vectors of $X$ having the following properties:
\begin{enumerate}
\item For each $i \geq 1$, $u_i$ is an eigenvector of $T$ associated to an eigenvalue $ {\lambda}_i$ of $T$ with $|{\lambda}_i|= 1$ and the ${\lambda}_i$’s all distinct;
\item $span[u_i;i \geq 1]$ is dense in $X$;
\item  For any $i \geq 1$ and any $ \varepsilon > 0$, there exists an $ n \neq i$ such that $||u_n - u_i|| < \varepsilon $.
\end{enumerate}
Then $T$ is frequently hypercyclic (in particular hypercyclic).
\end{theorem}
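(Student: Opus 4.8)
The plan is to obtain frequent hypercyclicity by an ergodic-theoretic argument: I will build a $T$-invariant Gaussian probability measure $\mu$ on $X$ that has full support and with respect to which $T$ is ergodic, and then conclude from the fact that for such a measure almost every point is frequently hypercyclic (Birkhoff's pointwise ergodic theorem applied to a countable base of open sets, as in the Bayart--Grivaux ergodicity method; see \cite{MR2533318}). Thus the entire problem reduces to manufacturing such a measure out of the data $(u_i,\lambda_i)$.

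First I would realise $\mu$ as the law of a random series. Normalising $\|u_i\|=1$ and choosing positive weights $(c_i)$ with $\sum_i c_i<\infty$, set
$$\xi=\sum_{i\ge 1}c_i\,g_i\,u_i,$$
where $(g_i)_{i\ge1}$ is a sequence of independent standard complex Gaussian variables; the summability of $(c_i)$ ensures almost sure convergence of the series in $X$, and I take $\mu$ to be the distribution of $\xi$. Two of the three hypotheses are consumed at once here. Invariance $T_{\#}\mu=\mu$ comes from condition (1): since $Tu_i=\lambda_i u_i$ with $|\lambda_i|=1$, one has $T\xi=\sum_i c_i(\lambda_i g_i)u_i$, and $(\lambda_i g_i)$ has the same law as $(g_i)$ because complex Gaussian distributions are rotation-invariant. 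Full support comes from condition (2): the topological support of $\mu$ is the closed linear span of $\{u_i\}$, which is all of $X$.

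The delicate point, and the main obstacle, is ergodicity, and this is exactly where condition (3) enters. For a Gaussian dynamical system, ergodicity is equivalent to weak mixing and, in turn, to the continuity (absence of atoms) of the spectral measure of the process; the naive discrete construction above carries the purely atomic spectral measure $\sum_i c_i^2\,\delta_{\lambda_i}$ and is therefore never ergodic. Hypothesis (3) is designed precisely to remove this obstruction: because each $u_i$ can be approximated arbitrarily well by eigenvectors $u_n$ attached to \emph{distinct} unimodular eigenvalues, the eigenvectors accumulate densely upon one another, and one can reorganise them into a continuous $X$-valued eigenvector field $E:\mathbb T\to X$ with $TE(\lambda)=\lambda E(\lambda)$ whose range still spans a dense subspace. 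Integrating the Gaussian construction against a continuous probability measure on $\mathbb T$ through $E$ then produces an invariant Gaussian measure with a \emph{continuous} spectral measure, hence an ergodic one, while condition (2) still guarantees full support.

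I expect the technical heart to be the passage from the pointwise approximation in (3) to a genuine continuous eigenvector field --- equivalently, the verification that the family $(u_i)$ is \emph{perfectly spanning}, i.e. that deleting the eigenvectors whose eigenvalues lie in any prescribed countable subset of $\mathbb T$ still leaves a dense span. This requires iterating (3) along a diagonal (tree-like) construction and using that the $\lambda_i$ are all distinct in order to dodge any countable exceptional set. Once perfect spanning is in hand, the existence of the ergodic full-support Gaussian measure, and therefore frequent hypercyclicity, follows from the Bayart--Grivaux machinery, and ordinary hypercyclicity is then automatic.
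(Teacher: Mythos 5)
First, note that the paper itself contains no proof of this statement: Theorem \ref{thm 2.5} is quoted from Grivaux \cite{MR2975340} and used as a black box, so your proposal can only be measured against the proof in that reference. Your global strategy is indeed the one used there: produce a $T$-invariant probability measure on $X$ with full support with respect to which $T$ is ergodic, and deduce frequent hypercyclicity from the pointwise ergodic theorem applied to a countable basis of open sets. Your preliminary analysis is also sound: with $\sum_i c_i<\infty$ the series $\sum_i c_i g_i u_i$ converges a.s., its law is $T$-invariant by rotation invariance of complex Gaussians and has full support by (2), and it is never ergodic because its spectral measure $\sum_i c_i^2\delta_{\lambda_i}$ is purely atomic; likewise, your reading of (3) as the route to perfect spanning (via a tree-like construction producing uncountably many limit eigenvectors near each $u_i$, so that any countable exceptional set of eigenvalues can be dodged) is exactly the right reduction.

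The genuine gap is in the repair step. The theorem is stated for an arbitrary complex separable Banach space $X$, and on such a space ``integrating the Gaussian construction against a continuous probability measure $\sigma$ on $\mathbb{T}$ through a continuous eigenvector field $E$'' does not, in general, produce any Gaussian measure at all: the candidate measure is the law of $\sum_n g_n x_n$ with $x_n=\int_{\mathbb{T}}e_n(\lambda)E(\lambda)\,d\sigma(\lambda)$ for an orthonormal basis $(e_n)$ of $L^2(\sigma)$, and the a.s. convergence of this series is precisely the requirement that the operator $f\mapsto\int_{\mathbb{T}} fE\,d\sigma$ from $L^2(\sigma)$ into $X$ be $\gamma$-radonifying. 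This is automatic when $X$ is a Hilbert space (the operator is then Hilbert--Schmidt, by Parseval) or when $X$ has type $2$, but it fails in general Banach spaces, and no amount of continuity of $E$ extracted from (1)--(3) repairs it; in the earlier Bayart--Grivaux work one needs H\"older-type regularity of the fields or geometric assumptions on $X$ for exactly this reason. Removing this obstruction is the whole point of the cited paper of Grivaux: her proof abandons Gaussian measures and constructs a non-Gaussian invariant measure, namely the law of a random series with bounded (Steinhaus-type) coefficients over carefully chosen blocks of eigenvectors supplied by condition (3), and establishes weak mixing (hence ergodicity) by direct covariance estimates, since the Gaussian dichotomy ``ergodic iff continuous spectral measure'' is unavailable for such measures. (A lesser issue: a single continuous field $E$ with densely spanning range need not exist; one must work with countably many fields, or with fields defined only $\sigma$-almost everywhere.) As written, then, your argument proves the theorem only when $X$ is a Hilbert space or has type $2$ --- which, incidentally, would suffice for this paper, since Theorem \ref{thm 2.5} is applied only to $\lambda X_b$ on the Hilbert space $\mathcal{H}(b)$ --- but it does not prove the stated Banach-space result.
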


It is also a natural question, given a family of hypercyclic operators to ask if they have a common hypercyclic vector. The following result gives a sufficient condition for a family of multiple of an operator to have a dense $G_{ \delta}$-set of common hypercyclic vectors.
\begin{theorem}[ Shkarin, \cite{MR2557957}]\label{thm 1.7}
Let $X$ be a separable  Fréchet space, $ T \in L(X)$, $ 0 \leq a < c \leq \infty$. Assume also that for all $ \alpha , \beta \in \mathbb{R}$ such that $ a < \alpha < \beta < c$ there exists a dense subset $E$ of $X$ and a map $ S: E \longrightarrow E$ such that $ TSx = x$, $ { \alpha }^{ -n}T^n x \rightarrow 0$ and $ { \beta }^n S^n x \rightarrow 0$ for each $x \in E$. Then
$$ \cap HC( \lambda T : c^{-1} <  | \lambda |<  a^{ -1} ),$$
is a dense $ G_{ \delta}$-set in $X$.
\end{theorem}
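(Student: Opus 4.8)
The plan is to prove this via the Baire category theorem, following the general template for common hypercyclicity criteria. The set in question is $\bigcap_{c^{-1}<|\lambda|<a^{-1}}HC(\lambda T)$, an \emph{uncountable} intersection, so the first move is to replace it by a countable one. Choosing rationals $\alpha_n\downarrow a$ and $\beta_n\uparrow c$ and setting $K_n:=\{\lambda:\beta_n^{-1}\le|\lambda|\le\alpha_n^{-1}\}$, one checks that the $K_n$ are compact, increasing, and exhaust the open annulus $\{c^{-1}<|\lambda|<a^{-1}\}$; hence the common hypercyclic set equals $\bigcap_n\bigl(\bigcap_{\lambda\in K_n}HC(\lambda T)\bigr)$. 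Since a countable intersection of dense $G_\delta$ sets is again a dense $G_\delta$ (Baire), it suffices to show that for each compact annulus $K=K_{\alpha,\beta}$ (with $a<\alpha<\beta<c$) the set $\bigcap_{\lambda\in K}HC(\lambda T)$ is a dense $G_\delta$.

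For the $G_\delta$ part I would fix a countable base $(V_j)_{j\ge 1}$ of nonempty open subsets of $X$ and write
$$\bigcap_{\lambda\in K}HC(\lambda T)=\bigcap_{j\ge 1}A_j,\qquad A_j:=\{x\in X:\forall\lambda\in K\ \exists n\ge 0,\ \lambda^nT^nx\in V_j\}.$$
The claim is that each $A_j$ is open. Given $x\in A_j$, for every $\lambda\in K$ there is $n(\lambda)$ with $\lambda^{n(\lambda)}T^{n(\lambda)}x\in V_j$; by continuity of $(\mu,y)\mapsto\mu^{n(\lambda)}T^{n(\lambda)}y$ this persists on a neighbourhood $W_\lambda\times U_\lambda$ of $(\lambda,x)$. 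Compactness of $K$ extracts a finite subcover $W_{\lambda_1},\dots,W_{\lambda_k}$, and on $U:=\bigcap_i U_{\lambda_i}$ every $\lambda\in K$ is handled by one of the finitely many times $n(\lambda_i)$; thus $U\subseteq A_j$ and $A_j$ is open. Here the whole role of passing to a compact $K$ is to license this finite-subcover step.

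It remains to prove density of each $A_j$, and this is where the three hypotheses enter: they are exactly a \emph{uniform} Hypercyclicity Criterion. For $x\in E$ and $\lambda\in K$ one has $\|(\lambda T)^nx\|\le\alpha^{-n}\|T^nx\|\to 0$ and, setting $S_\lambda:=\lambda^{-1}S$ (a right inverse of $\lambda T$ on $E$, since $TS=I$ gives $(\lambda T)^nS_\lambda^n=I$), $\|S_\lambda^nx\|\le\beta^{n}\|S^nx\|\to 0$, both uniformly over $\lambda\in K$. To show $A_j$ dense, given a nonempty open $O$ and a target ball $B(v,\varepsilon)\subseteq V_j$, use density of $E$ to pick $u\in E\cap O$ and $v\in E$, and build a perturbation
$$x=u+\sum_{i=1}^{p}\lambda_i^{-m_i}S^{m_i}v,$$
where $\{\lambda_i\}_{i=1}^p$ is a finite net of $K$ and $m_1<\dots<m_p$ are rapidly increasing integers. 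For $\lambda$ in the piece of $K$ around $\lambda_i$, evaluating at time $m_i$ gives a diagonal term $(\lambda/\lambda_i)^{m_i}v$ which is $\approx v$ once the net is fine relative to $m_i$, a negligible term $\lambda^{m_i}T^{m_i}u$, and cross terms $\lambda^{m_i}\lambda_k^{-m_k}\,T^{m_i}S^{m_k}v$ that must be forced to $0$; this places $x$ in $A_j\cap O$ provided $u$ and the $S^{m_i}v$ are small perturbations.

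The main obstacle is precisely the control of these cross terms together with the diagonal condition, i.e. making a \emph{single} vector work for the whole continuum $K$. Splitting $T^{m_i}S^{m_k}v$ into $S^{m_k-m_i}v$ for $k>i$ and $T^{m_i-m_k}v$ for $k<i$ and inserting the bounds $|\lambda|\le\alpha^{-1}$, $|\lambda_k|^{-1}\le\beta$, $\|S^nv\|=o(\beta^{-n})$, $\|T^nv\|=o(\alpha^n)$, one finds that each individual cross term can be made arbitrarily small by taking the time gaps $m_{i+1}-m_i$ large enough --- crucially this uses the little-$o$ (not merely big-$O$) decay. The delicate point is that the diagonal requirement $|(\lambda/\lambda_i)^{m_i}-1|<\varepsilon$ forces the net to be finer than $\sim 1/m_i$ in \emph{both} modulus and argument (the parameter set is a genuine annulus, not an interval), which inflates $p$ and hence competes with the need for well-separated times. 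Reconciling these is the technical heart; it is handled by a Costakis--Sambarino-type interleaving of the net and the time-windows, with gaps chosen recursively so that all past and future interferences sum to less than $\varepsilon$. This is exactly Shkarin's common-hypercyclicity argument, and once density of every $A_j$ is established the Baire scheme above yields the dense $G_\delta$ conclusion.
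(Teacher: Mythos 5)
Your Baire-category frame --- exhausting the open annulus by compact sub-annuli $K_n$, writing $\bigcap_{\lambda\in K}HC(\lambda T)=\bigcap_j A_j$, and proving each $A_j$ open by a finite-subcover argument --- is correct and standard. But note that the paper does not prove this theorem at all (it quotes it from Shkarin \cite{MR2557957}), so your argument must stand on its own, and it does not: the density step is asserted exactly where the entire content of the theorem lies. Concretely, in your scheme each time $m_i$ serves a single net point $\lambda_i$ and, by your own accounting, controls only those $\lambda$ within $\sim\varepsilon/m_i$ of $\lambda_i$ \emph{in both modulus and argument}, i.e.\ a cell of area $\sim\varepsilon^2/m_i^2$. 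Since the times are distinct positive integers, $m_i\ge i$, the total area such a construction can ever control is at most a fixed multiple of $\varepsilon^2\sum_{i\ge1}i^{-2}$, a bound independent of $p$ and of how the gaps are chosen; once $\varepsilon$ is small compared with the area of $K$, no ``Costakis--Sambarino-type interleaving with recursively chosen gaps'' can cover $K$. This counting obstruction is not a technicality: it is precisely why common hypercyclic vectors fail to exist for general two-parameter families (see \cite{MR2533318}). The theorem survives for the specific family $\{\lambda T\}$ only because of rotational structure your sketch never invokes: by the Le\'on-Saavedra--M\"uller theorem, $HC(e^{i\theta}R)=HC(R)$ for every operator $R$ and every unimodular $e^{i\theta}$, hence $HC(\lambda T)=HC(|\lambda|T)$, the angular variable costs nothing, and the intersection over the annulus equals the intersection over the segment of radii --- a genuinely one-dimensional problem, where harmonic divergence $\sum_i 1/m_i=\infty$ makes a covering argument feasible. (Equivalently, the set of $\lambda$ your vector actually handles at time $m_i$ is not one cell but the union of the $m_i$ cells at arguments $\arg\lambda_i+2\pi k/m_i$, the full $m_i$-th root preimage; any direct proof must exploit this arithmetic structure, and your ``finer than $1/m_i$ in both modulus and argument'' bookkeeping shows you are not.)

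There is a second gap even after reduction to an interval of radii: you apply the hypotheses with the same pair $(\alpha,\beta)$ that bounds $K$, which yields only $o(1)$ information, namely $\alpha^{-n}\Vert T^nx\Vert\to0$ and $\beta^n\Vert S^nx\Vert\to0$ with no rate. Your cross-term bound then has the form $(\beta/\alpha)^{m_i}\epsilon_{m_k-m_i}$ with merely $\epsilon_n\to0$, so the gap required after $m_i$ depends on an arbitrarily slow modulus of convergence; this can force the $m_i$ to grow so fast that $\sum_i 1/m_i<\infty$, destroying the covering even in one dimension. The repair is to use the hypothesis for a strictly larger pair $\alpha'<\alpha$, $\beta'>\beta$ (available because it is assumed for \emph{all} pairs in $(a,c)$), with its associated dense set $E'$ and map $S'$: then for every $\lambda\in K$ one has $\Vert(\lambda T)^nx\Vert\le(\alpha'/\alpha)^n\,(\alpha')^{-n}\Vert T^nx\Vert$ and $\Vert\lambda^{-n}(S')^nx\Vert\le(\beta/\beta')^n\,(\beta')^{n}\Vert(S')^nx\Vert$, i.e.\ geometric decay uniformly on $K$, after which arithmetic-type time sequences suffice. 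In short, your outline reproduces the soft part of the proof (Baire, compactness, openness) but omits the two ideas that constitute the theorem --- the reduction of the annulus to a radial segment via rotation invariance, and the upgrade from Kitai-type $o(1)$ decay to geometric decay by interior exhaustion of $(a,c)$ --- and the step you defer to an ``interleaving'' is, in the form you describe it, provably impossible.
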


We finish by given particular hypercyclic Toeplitz operators.

Rolewicz's result \cite{MR2533318} in 1960, says that the operator $  \lambda S^*= T_{ \lambda \bar{z}}: H^2 \rightarrow H^2$ for every $ \lambda \in \mathbb{C}, \hspace{0.1cm} | \lambda| > 1$, is hypercyclic, and it was shown using Kitai's Criterion (a particular case of the Hypercyclity Criterion) \cite{MR884467}. This result of Rolewicz was generalized by Godefroy-Shapiro \cite{MR1111569} in 1991.
\begin{theorem}[Godefroy-Shapiro]\label{thm 2.6} 
Let $ \varphi \in H^{ \infty}$. The operator $T_{ \bar{ \varphi} } : H^2 \rightarrow H^2$ is hypercyclic if and only if $\varphi$ is non-constant and $ \varphi ( \mathbb{D}) \cap \mathbb{T} \neq \emptyset$. 
\end{theorem}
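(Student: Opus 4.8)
The plan is to compute the eigenvectors of $T_{\bar\varphi}$ explicitly and then invoke the Godefroy--Shapiro Criterion (Theorem~\ref{thm 2.4}) for the sufficiency, handling the necessity by elementary norm estimates. The starting observation is that for $\varphi\in H^\infty$ the operator $T_{\bar\varphi}$ is the adjoint of the multiplication operator $T_\varphi$, and therefore acts on the reproducing kernels $k_\lambda$ of $H^2=\mathcal H(0)$ by
$$ T_{\bar\varphi}k_\lambda=\overline{\varphi(\lambda)}\,k_\lambda,\qquad \lambda\in\mathbb D. $$
Indeed, for any $f\in H^2$ one has $\langle f,T_{\bar\varphi}k_\lambda\rangle=\langle T_\varphi f,k_\lambda\rangle=\varphi(\lambda)f(\lambda)=\langle f,\overline{\varphi(\lambda)}k_\lambda\rangle$. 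Thus each $k_\lambda$ is an eigenvector with eigenvalue $\overline{\varphi(\lambda)}$, so that the location of the eigenvalues is governed by the image $\varphi(\mathbb D)$.

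For the sufficiency, assume $\varphi$ is non-constant with $\varphi(\mathbb D)\cap\mathbb T\neq\emptyset$. By the open mapping theorem $\varphi(\mathbb D)$ is open, so if it meets $\mathbb T$ it must contain both points of modulus strictly less than $1$ and points of modulus strictly greater than $1$. Hence the two sets
$$ \Omega_-=\{\lambda\in\mathbb D:|\varphi(\lambda)|<1\},\qquad \Omega_+=\{\lambda\in\mathbb D:|\varphi(\lambda)|>1\} $$
are both non-empty and open. I would then show that $\{k_\lambda:\lambda\in\Omega_\pm\}$ spans a dense subspace of $H^2$: if $f$ is orthogonal to every such $k_\lambda$, then $f(\lambda)=\langle f,k_\lambda\rangle=0$ on a set admitting an accumulation point in $\mathbb D$, whence $f\equiv 0$ by the identity principle. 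Since the eigenvalues $\overline{\varphi(\lambda)}$ for $\lambda\in\Omega_-$ (resp.\ $\Omega_+$) have modulus $<1$ (resp.\ $>1$), both hypotheses of Theorem~\ref{thm 2.4} are satisfied and $T_{\bar\varphi}$ is hypercyclic.

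For the necessity, I would show that if either condition fails the orbits cannot be dense. If $\varphi\equiv c$ is constant, then $T_{\bar\varphi}=\bar c\,I$ and every orbit lies in a one-dimensional subspace, hence is not dense. If $\varphi$ is non-constant but $\varphi(\mathbb D)\cap\mathbb T=\emptyset$, then $\varphi(\mathbb D)$, being connected and disjoint from $\mathbb T$, lies entirely in $\mathbb D$ or entirely in $\{|w|>1\}$. In the first case $\|\varphi\|_\infty\le 1$, so $\|T_{\bar\varphi}^{\,n}\|\le\|\varphi\|_\infty^{\,n}\le 1$ and all orbits are bounded, precluding hypercyclicity in the infinite-dimensional space $H^2$. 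In the second case $1/\varphi\in H^\infty$ with $\|1/\varphi\|_\infty\le 1$, and since $T_{\bar\varphi}T_{\overline{1/\varphi}}=T_{\overline{1}}=I$, the operator $T_{\bar\varphi}$ is invertible with $\|T_{\bar\varphi}^{-1}\|=\|1/\varphi\|_\infty\le 1$; consequently $\|T_{\bar\varphi}^{\,n}f\|\ge\|f\|$ for every $f$, so no orbit can approach $0$ and $T_{\bar\varphi}$ is again not hypercyclic.

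The computation of the eigenvectors and the two norm estimates are routine; the crux of the argument is the sufficiency, and within it the one genuinely essential point is the interplay between the open mapping theorem, which forces $\Omega_-$ and $\Omega_+$ to be non-empty open sets as soon as $\varphi(\mathbb D)$ straddles $\mathbb T$, and the density of the reproducing kernels indexed by any subset of $\mathbb D$ having an accumulation point. I expect no serious obstacle beyond verifying carefully that $\Omega_-$ and $\Omega_+$ are both non-empty, which is precisely where the two hypotheses ``non-constant'' and ``$\varphi(\mathbb D)\cap\mathbb T\neq\emptyset$'' enter.
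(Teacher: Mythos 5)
Your proof is correct and follows essentially the same route as the paper's (the paper proves the general non-extreme $\mathcal H(b)$ version in Section~4 and recovers this statement at $b=0$): eigenvector computation $T_{\bar\varphi}k_\lambda=\overline{\varphi(\lambda)}k_\lambda$, the open mapping theorem to get both eigenvalue regions non-empty, density of kernel spans via the identity principle, and the Godefroy--Shapiro Criterion for sufficiency, with the same connectedness dichotomy for necessity. The only cosmetic difference is that in the case $\varphi(\mathbb D)\subset\{\,|w|>1\,\}$ you argue directly that $\|T_{\bar\varphi}^{\,n}f\|\ge\|f\|$ so orbits stay away from $0$, whereas the paper invokes the fact that an invertible operator is hypercyclic if and only if its inverse is.
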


\section{Compactness of ${T_{\overline \varphi}}_{| \mathcal H(b)}$}

The compactness property of the operators ${T_{\overline \varphi}}_{| \mathcal H(b)}$ will depend greatly on whether $ b $ is an extreme point of the closed unit ball of $ H^{ \infty} $ or not.\\
The non-extreme case is similar to what happens for $ H^2$. In other words, when $ b $ is non-extreme, ${T_{\overline \varphi}}_{| \mathcal H(b)}$ with $ \varphi \in H^{ \infty}$ is never compact (except for the trivial case where $ \varphi = 0 $).\\
While for the extreme case, compactness results of the operator ${T_{\overline \varphi}}_{| \mathcal H(b)}$ are obtained only with $ \varphi \in H^{ \infty} \cap C( \mathbb{T})$ and they depend on wether $b$ is inner or not.\\

\begin{theorem}\label{thm 2.1}
Let $b$ be a non-extreme point of the closed unit ball of $ H^{ \infty}$ and let $ \varphi \in H^{ \infty}$. Then the operator\\
$$\begin{array}{ccccc}
T_{ \overline{ \varphi}}  & : & { \mathcal{H} }(b) & \to & { \mathcal{H} }(b) \\
 & & f & \mapsto & P_+( \overline{ \varphi} f) \\
\end{array}.$$
is compact if and only if $ \varphi = 0 $.
\end{theorem}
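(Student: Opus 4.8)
The plan is to show that if $T_{\overline\varphi}$ restricted to $\mathcal H(b)$ is compact then $\varphi=0$; the converse is trivial. The guiding principle is that in the non-extreme case $\mathcal H(b)$ contains all the Hardy reproducing kernels $k_\lambda$, and the action of $T_{\overline\varphi}$ on these kernels is explicit. Recall that $T_{\overline\varphi}k_\lambda=\overline{\varphi(\lambda)}\,k_\lambda$, so each $k_\lambda$ is an eigenvector with eigenvalue $\overline{\varphi(\lambda)}$. The idea is to exploit compactness by sending $\lambda$ to the boundary: the normalized reproducing kernels $k_\lambda/\|k_\lambda\|$ tend weakly to $0$ as $|\lambda|\to 1$, and a compact operator maps weakly-null sequences to norm-null sequences. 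This should force $|\varphi(\lambda)|\to 0$ radially along almost every direction, and then a normal-families or boundary-uniqueness argument should give $\varphi\equiv 0$.

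First I would record the eigenvector relation $T_{\overline\varphi}k_\lambda=\overline{\varphi(\lambda)}k_\lambda$, valid in $H^2$ and inherited by $\mathcal H(b)$ since (by the non-extreme hypothesis, via \cite[Theorem 23.23 and corollary 25.8]{MR3617311}) each $k_\lambda$ lies in $\mathcal H(b)$. Second I would choose a sequence $\lambda_n$ with $|\lambda_n|\to 1$ and consider $u_n=k_{\lambda_n}/\|k_{\lambda_n}\|_b$. The key auxiliary step is to show $u_n\to 0$ weakly in $\mathcal H(b)$: because $\mathcal H(b)$ is a reproducing kernel space with kernel $k^b_\lambda$, it suffices to test against a dense set, and $\langle u_n,k^b_\mu\rangle_b=\overline{u_n(\mu)}$ can be estimated; one shows the pointwise evaluations of $u_n$ go to $0$ while $\|u_n\|_b$ stays bounded (indeed equal to $1$), giving weak convergence to $0$. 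Third, compactness then yields $\|T_{\overline\varphi}u_n\|_b=|\varphi(\lambda_n)|\,\|u_n\|_b=|\varphi(\lambda_n)|\to 0$, so $\varphi(\lambda_n)\to 0$.

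Since the sequence $\lambda_n$ approaching the boundary was essentially arbitrary, this shows $|\varphi(z)|\to 0$ as $|z|\to 1$ in a suitable sense. To conclude $\varphi\equiv 0$, I would argue that a bounded analytic function whose modulus tends to $0$ along every sequence approaching $\mathbb T$ must vanish: by the maximum principle applied on circles $|z|=r$, $\sup_{|z|=r}|\varphi(z)|\to 0$ as $r\to 1$, and by the maximum modulus principle $\|\varphi\|_\infty=\lim_{r\to1}\sup_{|z|=r}|\varphi(z)|=0$. Alternatively one can invoke that a function in $H^\infty$ with boundary values $0$ a.e. vanishes identically.

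The main obstacle I expect is the weak-null claim for the normalized kernels in the $\mathcal H(b)$-norm rather than in $H^2$. The norm comparison provided by Theorem \ref{thm 1.2} (with the extra term $\|f^+\|_2^2$) is the right tool: one must control both $\|k_{\lambda_n}\|_b$ from below (so the normalization does not blow up) and the evaluations $k_{\lambda_n}(\mu)$ in terms of the $\mathcal H(b)$ inner product. Establishing that $\|k_{\lambda_n}\|_b$ does not decay too fast as $|\lambda_n|\to1$ (equivalently, understanding the boundary behaviour of $\|k^b_\lambda\|_b^2=k^b_\lambda(\lambda)=(1-|b(\lambda)|^2)/(1-|\lambda|^2)$ versus $\|k_\lambda\|_b$) is the delicate quantitative point, and this is precisely where the non-extreme hypothesis and the representation $|a|^2+|b|^2=1$ a.e. enter to guarantee the argument goes through.
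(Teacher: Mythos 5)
Your proposal is correct, and its skeleton is the same as the paper's: in the non-extreme case the Cauchy kernels $k_\lambda$ belong to $\mathcal H(b)$ and are eigenvectors of $T_{\overline\varphi}$ with eigenvalue $\overline{\varphi(\lambda)}$; the normalized kernels $u_n=k_{\lambda_n}/\|k_{\lambda_n}\|_b$ are weakly null as $|\lambda_n|\to 1$; compactness then gives $\|T_{\overline\varphi}u_n\|_b=|\varphi(\lambda_n)|\to 0$ for every such sequence, whence $\varphi\equiv 0$. Where you genuinely differ is in the verification of the weak-null step, and your route is simpler. The paper tests against the dense class $\{f\in\mathcal H(b): f,\,f^+\in H^\infty\}$, computing $\langle f,k_{\lambda_n}\rangle_b=f(\lambda_n)+\tfrac{b(\lambda_n)}{a(\lambda_n)}f^+(\lambda_n)$ via Theorem \ref{thm 1.2}, which requires the explicit norm formula $\|k_\lambda\|_b^2=\tfrac{1}{1-|\lambda|^2}\bigl(1+|b(\lambda)|^2/|a(\lambda)|^2\bigr)$ and a careful bound on the term $b(\lambda_n)/\bigl(a(\lambda_n)\|k_{\lambda_n}\|_b\bigr)$. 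You instead test against the reproducing kernels $k_\mu^b$, whose span is dense in any reproducing kernel Hilbert space, so that $\langle u_n,k_\mu^b\rangle_b=u_n(\mu)=k_{\lambda_n}(\mu)/\|k_{\lambda_n}\|_b$ (no conjugate, a harmless slip in your write-up); then $|k_{\lambda_n}(\mu)|\le (1-|\mu|)^{-1}$ and the lower bound $\|k_{\lambda_n}\|_b\ge\|k_{\lambda_n}\|_2=(1-|\lambda_n|^2)^{-1/2}\to\infty$, which is free from the contractive containment $\mathcal H(b)\subset H^2$, finish the step with no reference to $f^+$ or to the pair $(a,b)$ at all. This also corrects a misattribution in your final paragraph: the quantitative control of $\|k_{\lambda_n}\|_b$ is \emph{not} where non-extremeness enters (contractive containment holds for every $b$); the non-extreme hypothesis is needed only to ensure $k_\lambda\in\mathcal H(b)$ in the first place. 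Your closing maximum-modulus argument, showing that $\varphi(\lambda_n)\to 0$ along every sequence approaching $\mathbb T$ forces $\sup_{|z|=r}|\varphi|\to 0$ and hence $\varphi\equiv 0$, is correct and makes explicit a step the paper leaves implicit.
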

\begin{proof}
Let $a$ be the unique outer function such that $(a,b)$ is a pair.\\
\\
Suppose that $  T_{ \bar{ \varphi} }  $ is compact in ${ \mathcal{H} } (b)$. Notice that for every $ ({ \lambda}_n)_n \subset \mathbb{D} $ such that $  | { \lambda}_n| \rightarrow 1 $, the sequence $ ( \frac{k_{ \lambda_n}}{ || k_{ \lambda_n} ||_b})_n$ converges weakly to $0$ in ${ \mathcal{H} } (b)$.\\
 \\
 Indeed, let $ f \in { \mathcal{H}}(b)$ such that $f$ and $f^+ \in H^{ \infty} $. Recall that $f^+$ is defined in Theorem \ref{thm 1.2}. Then, using that
  $$ T_{ \bar{b}}k_{ \lambda_n} = \overline{b( { \lambda}_n)} k_{ \lambda_n}= T_{ \bar{a}} \left(\frac{ \overline{b( { \lambda}_n)}}{ \overline{a( { \lambda}_n)}} k_{ \lambda_n} \right),$$\\
we see that $$k_{ \lambda_n}^+ = \frac{ \overline{b( { \lambda}_n)}}{ \overline{a( { \lambda}_n)}} k_{ \lambda_n}.$$
Whence by Theorem  \ref{thm 1.2}, we have 
 \begin{eqnarray*}
  < f, \frac{k_{ \lambda_n}}{ \mid \mid k_{ \lambda_n} \mid\mid_b} >_b &=& <f,\frac{k_{ \lambda_n}}{ \mid \mid k_{ \lambda_n} \mid\mid_b} >_2 +<f^+, \frac{k_{ \lambda_n}^+}{ \mid \mid k_{ \lambda_n}||_b }   >_2 \\
  &=& \left( f( { \lambda}_n)+ \frac{b( { \lambda}_n) }{a( { \lambda}_n)} f^+ ( { \lambda}_n ) \right) \frac{1}{||k_{ { \lambda}_n}||_b} .\\
\end{eqnarray*}  
On the other hand, it is known that in the non-extreme case:
$$ ||{k_{ \lambda}}_n||_b^2 = \frac{1}{ 1 - |{ \lambda}_n|^2} \left( 1 + \frac{ |b( { \lambda}_n)|^2}{|a( { \lambda}_n)|^2}  \right)  $$
(see \cite[Corollary 23.25]{MR3617311}). Hence
$$ \frac{|b( { \lambda}_n)|^2}{|a( { \lambda}_n)|^2  ||k_{ { \lambda}_n}||_b^2 } = \frac{ (1 - |{ \lambda}_n|^2)|b( { \lambda}_n)|^2 }{|a( { \lambda}_n)|^2+ |b( { \lambda}_n)|^2} \leq 1 - |{ \lambda}_n|^2.  $$
Using this inequality and the inequality 
$$\mid \mid k_{ \lambda_n} \mid\mid_b^2  \hspace{0.1cm} \geq \frac{1}{1 - |{ \lambda}_n|^2},$$
we deduce that 
\begin{eqnarray*}
|  < f, \frac{k_{ \lambda_n}}{ || k_{ \lambda_n} ||_b} >_b | &\leq & \left( |f( \lambda_n )| + | f^+ ( \lambda_n )| \right) \sqrt{1 - |{ \lambda}_n|^2}\\
 & \leq & \left( ||f||_{ \infty} + || f^+ ||_{ \infty} \right)\sqrt{1 - |{ \lambda}_n|^2} \rightarrow 0 \hspace{0.5 cm} as \hspace{0.5 cm} n \rightarrow \infty .\\
\end{eqnarray*}

Furthermore, the set $\lbrace  f \in { \mathcal{H}}(b)  ; f \& f^+ \in H^{ \infty } \rbrace  $ is dense in ${ \mathcal{H}}(b),             $ since $  \lbrace k_{ \lambda}^b ;  \lambda \in \mathbb{D}  \rbrace  \subset  \lbrace f \in { \mathcal{H}}(b)  ; f \& f^+ \in H^{ \infty } \rbrace $.
 Indeed, for every $\lambda \in \mathbb{D}$ ,
  $$(k_{ \lambda}^b )^+ =(k_{ \lambda} - \overline{b( \lambda)} b k_{ \lambda})^+ = k_{ \lambda}^+ - \overline{b( \lambda)} (b k_{ \lambda})^+ = \overline{b( \lambda )} a k_{ \lambda} \in H^{ \infty },$$
  with
  $$ (b k_{ \lambda})^+ = \frac{k_{ \lambda}}{\overline{a( \lambda)}} - a k_{ \lambda} $$ (see \cite[Theorem 23.23]{MR3617311}).
\\
Then, for every $f \in  { \mathcal{H}}(b)$, and every $ \varepsilon > 0$, there exists  $g \in \mathcal{H}(b) \cap H^{ \infty}$ with $g^+ \in H^{ \infty} $ such that $ ||f-g||_b \leq \frac{ \varepsilon}{2}.$\\
\\
Using previous computations we then see that there exists $ N_0 \in \mathbb{N}$ such that $ n \geq N_0 \Rightarrow | < g,\frac{k_{ \lambda_n}}{ || k_{ \lambda_n} ||_b} >_b | \leq \frac{ \varepsilon}{2} $. Hence for $ n \geq N_0$,
\begin{eqnarray*}
 | < f,\frac{k_{ \lambda_n}}{ || k_{ \lambda_n} ||_b}>_b | & \leq & | < f-g,\frac{k_{ \lambda_n}}{ || k_{ \lambda_n}||_b} >_b | + | < g,\frac{k_{ \lambda_n}}{ || k_{ \lambda_n} ||_b} >_b | \\
& \leq & ||f -g||_b +      | < g,\frac{k_{ \lambda_n}}{ || k_{ \lambda_n} ||_b} >_b | \leq  \varepsilon. \\
 \end{eqnarray*}
 Hence, the sequence $ ( \frac{k_{ \lambda_n}}{ || k_{ \lambda_n} ||_b})_n$ converges weakly to $0$ in ${ \mathcal{H} } (b)$.\\
 \\
Now by compactness of ${T_{\overline \varphi}}_{| \mathcal H(b)}$ it follows that for every $ ({ \lambda}_n)_n \subset \mathbb{D} $ such that $  | { \lambda}_n| \rightarrow 1 $,
\begin{equation}\label{eq 1}
 || T_{ \overline{ \varphi} } \frac{k_{ \lambda_n}}{ \mid \mid k_{ \lambda_n} \mid\mid_b} ||_b \rightarrow 0 \hspace{0.5 cm} as \hspace{0.5 cm} n \rightarrow \infty .
 \end{equation}
But $ T_{ \overline{ \varphi} } k_{ \lambda_n} = \overline{ \varphi( \lambda_n)} k_{ \lambda_n}.$
Thus $ \eqref{eq 1}$ is equivalent to
$$ \forall ({ \lambda}_n)_n \subset \mathbb{D} ; \lim\limits_{ n \rightarrow \infty }| \lambda_n |= 1, | \varphi ( \lambda_n ) |  \rightarrow 0 . $$\\
Which implies that $ \varphi = 0.$
\end{proof}

The proof of Theorem \ref{thm 2.1} obviously doesn't work in the case when $b$ is an extreme point of the closed unit ball of $H^{ \infty}$, since in that case, the Cauchy kernels $k_{ \lambda}$ do not belong to $\mathcal{H}(b)$ when $b( \lambda)  \neq 0, \hspace{0.1 cm} \lambda \in \mathbb{D}. $ Nevertheless, with additional assumption that $b$ is not inner and for symbols $ \varphi \in H^{ \infty} \cap C( \mathbb{T}) $ we will get a similar result in the extreme case.\\
 
We begin by studying the compactness of the general operator $T_{ \overline{ \varphi}}:{ \mathcal{H} }(b) \rightarrow H^2  $ with $ \varphi \in  C( \mathbb{T})$, using the same technique used by Garcia, Ross and Wogen \cite{MR3203060} to prove the Ahern-Clark result on compactness of $ A_{ \varphi}^{ \Theta}$. 
\begin{theorem}\label{thm 2.2}
 Let $b$ be an extreme point of the closed unit ball of $ H^{ \infty}$ and let $ \varphi \in  C( \mathbb{T})$. Then the operator,\\ 
$$\begin{array}{ccccc}
T_{ \overline{ \varphi}}  & : & { \mathcal{H} }(b) & \to & H^2 \\
 & & f & \mapsto & P_+( \overline{ \varphi} f) \\
\end{array},$$\\
is compact if and only if $ \varphi_{ | { { \sigma}(b) \cap \mathbb{T} } } = 0   .$
\end{theorem}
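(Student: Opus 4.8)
The plan is to prove both implications by the reproducing-kernel/Berezin-transform method of Garcia--Ross--Wogen, adapted to $\mathcal H(b)$.

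For necessity, suppose $\zeta_0 \in \sigma(b) \cap \mathbb{T}$ with $\varphi(\zeta_0) \neq 0$; I would derive a contradiction with compactness. Since $\liminf_{z \to \zeta_0}|b(z)| < 1$, choose $z_n \to \zeta_0$ in $\mathbb{D}$ and $\rho < 1$ with $|b(z_n)| \le \rho$. Let $\hat k_{z_n}^b = k_{z_n}^b / \|k_{z_n}^b\|_b$ and let $\hat k_{z_n} = \sqrt{1-|z_n|^2}\,k_{z_n} \in H^2$ be the normalized Cauchy kernel. First I would check $\hat k_{z_n}^b \rightharpoonup 0$ weakly in $\mathcal H(b)$: this is clear on the dense linear span of the reproducing kernels (which are bounded functions), using $\|k_{z_n}^b\|_b^2 = (1-|b(z_n)|^2)/(1-|z_n|^2) \to \infty$ together with the uniform bound $\|\hat k_{z_n}^b\|_b = 1$. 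Then I would test the image against $\hat k_{z_n}$: writing $\langle T_{\bar\varphi}\hat k_{z_n}^b, \hat k_{z_n}\rangle_2 = \|k_{z_n}^b\|_b^{-1}\langle k_{z_n}^b, \varphi\hat k_{z_n}\rangle_{L^2}$ and splitting $\varphi = \varphi(\zeta_0) + (\varphi - \varphi(\zeta_0))$, the main term equals $\overline{\varphi(\zeta_0)}\,\sqrt{1-|b(z_n)|^2}$ by a direct computation from the explicit norm formula, which is bounded below by $|\varphi(\zeta_0)|\sqrt{1-\rho^2} > 0$. The error term is $\le \|(\varphi - \varphi(\zeta_0))\hat k_{z_n}\|_{L^2}$ (using the contractive inclusion $\mathcal H(b) \hookrightarrow H^2$), and it tends to $0$ because $|\hat k_{z_n}|^2\,dm$ is the Poisson kernel at $z_n$, an approximate identity concentrating at $\zeta_0$, while $\varphi$ is continuous. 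Hence $\|T_{\bar\varphi}\hat k_{z_n}^b\|_2$ stays bounded away from $0$, contradicting compactness, which would force it to $0$.

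For sufficiency, I would first note that $\varphi \mapsto T_{\bar\varphi}|_{\mathcal H(b)}$ is a contraction from $C(\mathbb{T})$ into $B(\mathcal H(b), H^2)$, so $\mathcal{C} := \{\varphi \in C(\mathbb{T}) : T_{\bar\varphi}\text{ compact}\}$ is norm-closed. Since $\sigma(b)$ is closed, functions vanishing on a neighbourhood of $E := \sigma(b) \cap \mathbb{T}$ are dense in $\{\varphi : \varphi|_E = 0\}$ by a routine truncation argument in $C(\mathbb{T})$, so it suffices to prove $T_{\bar\varphi}$ compact whenever $\varphi$ vanishes on a neighbourhood of $E$. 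Using a partition of unity I reduce to $\operatorname{supp}\varphi \subset I$ for a closed arc $I \subset \mathbb{T}\setminus\operatorname{clos}(\sigma(b))$. The central observation is the factorization $T_{\bar\varphi} = P_+ \circ M_{\bar\varphi} \circ R$, where $R : \mathcal H(b) \to L^2(I,dm)$ is the restriction $f \mapsto f|_I$ and $M_{\bar\varphi}$ extends $\bar\varphi f$ by zero off $I$; indeed $\bar\varphi f$ is supported in $I$, so $\|P_+(\bar\varphi f)\|_2 \le \|\varphi\|_\infty \|f|_I\|_{L^2(I)}$. As $P_+$ and $M_{\bar\varphi}$ are bounded, compactness of $T_{\bar\varphi}$ follows once $R$ is shown to be compact.

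The compactness of $R$ is the main obstacle. By the generalization of the Livšic--Moeller theorem (\cite[Theorem 20.13]{MR3617311}), every $f \in \mathcal H(b)$ extends analytically across $I$ and $|b| = 1$ on $I$; I would upgrade this to the quantitative statement that the unit ball of $\mathcal H(b)$ is uniformly bounded on some complex neighbourhood $\Omega \supset I$, i.e. $\sup_{z \in \Omega}|f(z)| \le C_\Omega \|f\|_b$. From inside $\mathbb{D}$ this follows from $|f(z)| \le \|f\|_b\,\|k_z^b\|_b = \|f\|_b\sqrt{(1-|b(z)|^2)/(1-|z|^2)}$ together with the fact that, $b$ being analytic and unimodular on $I$, the ratio $(1-|b(z)|^2)/(1-|z|^2)$ stays bounded as $z \to I$; the bound on the reflected side of $\Omega$ comes from the continuation, each evaluation $f \mapsto f(z)$, $z \in \Omega$, being a bounded functional on $\mathcal H(b)$ by the closed graph theorem. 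Granting the uniform bound, $\{f : \|f\|_b \le 1\}$ is a normal family on $\Omega$, so by Montel's theorem the restriction to the compact arc $I$ is relatively compact in $C(I) \hookrightarrow L^2(I)$, which is precisely the compactness of $R$. Assembling the factorization with the density and closedness argument then yields compactness of $T_{\bar\varphi}$ for every $\varphi$ vanishing on $\sigma(b) \cap \mathbb{T}$, completing the proof.
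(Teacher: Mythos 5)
Your necessity direction is correct and is essentially the paper's own argument: a Berezin-type computation against the normalized kernels $k_{z_n}^b/\|k_{z_n}^b\|_b$ along a sequence $z_n\to\zeta_0$ with $|b(z_n)|\le\rho<1$, weak nullity of these kernels via density of the kernel span, and a contradiction with the weak-to-norm continuity of a compact operator. Your pairing with the normalized Cauchy kernel $\hat k_{z_n}$ (so that $|\hat k_{z_n}|^2\,dm$ is the Poisson kernel at $z_n$) is a slightly tidier way to organize the same estimate that the paper carries out with the function $F_{\lambda_n}$ and the split of the integral over $|t-\alpha|\le\delta$ and $|t-\alpha|\ge\delta$; your main-term identity $\langle T_{\bar\varphi}\hat k_{z_n}^b,\hat k_{z_n}\rangle_2=\overline{\varphi(\zeta_0)}\sqrt{1-|b(z_n)|^2}+o(1)$ checks out.

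The sufficiency direction is where you leave the paper's route, and it is where there is a genuine gap. Everything rests on your claim that the unit ball of $\mathcal H(b)$ is uniformly bounded on a \emph{two-sided} complex neighbourhood $\Omega\supset I$, which you justify by saying that each evaluation $f\mapsto f(z)$, $z\in\Omega$, is bounded ``by the closed graph theorem''. This fails as stated, for two reasons. First, it presupposes that all $f\in\mathcal H(b)$ continue analytically to one \emph{common} $\Omega$: the continuation theorem you cite gives each individual $f$ an extension across $I$, but the domain of that extension may a priori depend on $f$, so for a fixed exterior point $z$ the functional $f\mapsto f(z)$ is not even defined on all of $\mathcal H(b)$ without further argument. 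Second, even granting a common domain, you cannot verify that the graph of a single exterior evaluation is closed: from $f_n\to f$ in $\mathcal H(b)$ and $f_n(z)\to c$ you would need to conclude that the extension of $f$ takes the value $c$ at $z$, but $\mathcal H(b)$-convergence only controls the functions inside $\mathbb D$, and without local boundedness of the extensions near $z$ --- precisely the normality you are trying to establish --- nothing identifies the limit. The claim itself is true and can be repaired, e.g.\ by a Baire-category argument (the sets $E_N$ of those $f$ whose extension to the $1/N$-neighbourhood of $I$ is bounded by $N$ are closed, by Montel plus identification of limits inside $\mathbb D$, and they cover $\mathcal H(b)$), or by applying the closed graph theorem to the extension map into the Fr\'echet space $\operatorname{Hol}(\Omega)$ once a common $\Omega$ has been produced; but as written the step is not proved.

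It is also worth knowing that the two-sided detour is unnecessary, which is how the paper sidesteps the issue entirely. Since $b$ is analytic and unimodular on a neighbourhood of the support of $\psi$, the boundary kernels $k_\zeta^b$ exist there with $\sup_\zeta\|k_\zeta^b\|_b=\sup_\zeta\sqrt{|b'(\zeta)|}<\infty$; for a weakly null sequence $(f_n)$ one gets $f_n(\zeta)=\langle f_n,k_\zeta^b\rangle_b\to0$ pointwise together with a uniform sup bound, and dominated convergence yields $\|\bar\psi f_n\|_2\to0$ directly --- no restriction operator, partition of unity, or normal family is needed. (Alternatively, your operator $R$ can be shown compact one-sidedly by Arzel\`a--Ascoli on $I$, using norm continuity of $\zeta\mapsto k_\zeta^b$.) Your interior bound on $(1-|b(z)|^2)/(1-|z|^2)$ near $I$ is the right estimate; the missing ingredient is the boundary reproducing kernel, not the reflection across $\mathbb T$.
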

\begin{proof}
$ ( \Leftarrow ) $ Suppose that $ \varphi_{ | { { \sigma}(b) \cap \mathbb{T} } } = 0   .$
Let $ \varepsilon > 0 $ and pick $  \psi \in C( \mathbb{T} ) $; $ \psi = 0$ on an open set containing $ clos({ \sigma}(b) \cap \mathbb{T}) $ and $ || \psi - \varphi ||_{ \infty } < \varepsilon $. Since $  ||T_{ \overline{ \varphi}} - T_{ \overline{ \psi}}||_{ L({ \mathcal{H} }(b), H^2)} \leqslant || \psi - \varphi ||_{ \infty }  < \varepsilon,$ it suffices to show that $ T_{ \overline{ \psi}} : { \mathcal{H} }(b) \rightarrow H^2 $ is compact.\\
\\
Let $ K = \overline { \psi^{ -1} ( \mathbb{C} \setminus \{ 0 \} )}$ then $ K \subset \mathbb{T} \setminus clos ({ \sigma} (b))$. And consider $ (f_n)_n$ a sequence of ${ \mathcal{H} }(b)$ which tends weakly to zero.

We know that each $ f_n \in { \mathcal{H}}(b)$ has an analytic continuation across $K$ (already mentioned in the preliminaries) from which it follows that
$\forall \zeta \in K , f_n( \zeta)= < f_n , k_{ \zeta}^b >_b \rightarrow 0,$ where\\
$$  k_{ \zeta}^b (z)= \frac{1 - \overline{b( \zeta)} b(z)}{1 - \bar{ \zeta}z},
$$
(see \cite[Theorem 21.1]{MR3617311} 
Since $b$ is analytic on a neighborhood of the compact set $K$ we obtain\\
\begin{equation}\label{eq 4}
 \forall \zeta \in K , | f_n( \zeta ) | = | < f_n , k_{ \zeta}^b >_b | \leq ||f_n||_b || k_{ \zeta}^b ||_b \leq c \underset{ \zeta \in K }{\sup} \sqrt{b' ( \zeta )} < \infty \hspace{0.5cm}.
 \end{equation}
However, for every $n \in \mathbb{N}$, $|| f_n||_b \leq C $ since $ (f_n)_n$ tends weakly to zero in ${ \mathcal{H} }(b)$.
By the dominated convergence theorem, and using $ \eqref{eq 4}$ it follows that
$$ ||T_{ \overline{ \psi}} f_n  ||_2^2 \leq || \overline{ \psi} f_n ||_2^2 = \int_{ \mathbb{T}} | \psi |^2 | f_n |^2 d \zeta = \int_K | \psi |^2 | f_n |^2 d \zeta \rightarrow 0.$$
whence $ T_{ \overline{ \psi}}:{ \mathcal{H}}(b) \longrightarrow H^2 $ is compact then  $ T_{ \overline{ \varphi}}:{ \mathcal{H}}(b) \longrightarrow H^2 $ is compact
$ ( \Rightarrow )$ Suppose that $\varphi \in  C( \mathbb{T} ), \zeta \in \sigma (b) \cap \mathbb{T} $ and $ {T_{ \overline{ \varphi} }}_{ | { \mathcal{H} }(b) }$ is compact.
Let $$ F_{ \lambda} (z) = \frac{ 1 - | \lambda|^2}{1 - | b( \lambda ) |^2}  \left| \frac{1 - \overline{b( \lambda)} b(z)}{1 - \bar{ \lambda}z} \right|^2,$$\\
which is the absolute value of the normalized reproducing kernel for ${ \mathcal{H} }(b).$ Observe that $ F_{ \lambda} (z) \geq 0 $.\\
Since $ \zeta \in { \sigma}(b) \cap \mathbb{T}$ then there is a sequence $ \lambda_n$ in $ \mathbb{D}$ such that $ \lambda_n \rightarrow \zeta $ and $ | b( \lambda_n ) | \rightarrow c $ with $c < 1$ ( by the definition of the spectrum of $b$ already mentioned).
Suppose that $ \zeta = e^{ i \alpha } $ and note that if $ | t - \alpha | \geq \delta ,$ then \\
$$  F_{ \lambda_n} (e^{it}) \leq C_{ \delta } \frac{1 - | \lambda_n|^2 }{ 1 - | b( \lambda_n )|^2}, $$\\
for some absolute constant $ C_{ \delta } > 0.$ Thus since $ | b( \lambda_n ) | \rightarrow c $ with $c < 1$, we get that 
$$  \sup_{ | t - \alpha| \geq \delta} F_{ \lambda_n} (e^{it}) \rightarrow 0 \hspace{0.2 cm} as \hspace{0.1 cm} n \rightarrow \infty . $$
Write,
\begin{align*}
       & \varphi ( \zeta ) \frac{1}{2 \pi} \int_{ - \pi}^{ \pi} F_{  \lambda_n} (e^{it}) dt - \frac{1}{ \mid \mid k_{ \lambda_n}^b \mid\mid_b^2} < k_{ \lambda_n}^b , T_{ \bar{\varphi} } k_{ \lambda_n}^b >_2 \\
  &= \varphi ( \zeta ) \frac{1}{2 \pi} \int_{ - \pi}^{ \pi} F_{  \lambda_n} (e^{it}) dt - \frac{1}{ \mid \mid k_{ \lambda_n}^b \mid\mid_b^2} < k_{ \lambda_n}^b , P_{+} ( \bar{\varphi} k_{ \lambda_n}^b )>_2\\
 &= \varphi ( \zeta ) \frac{1}{2 \pi} \int_{ - \pi}^{ \pi} F_{  \lambda_n} (e^{it}) dt - \frac{1}{ \mid \mid k_{ \lambda_n}^b \mid\mid_b^2} < k_{ \lambda_n}^b , \bar{\varphi}  k_{ \lambda_n}^b >_2 \hspace{0.3 cm} (\mbox{see \cite[lemma 4.8]{MR3617311}})\\
 & = \varphi ( \zeta ) \frac{1}{2 \pi} \int_{ - \pi}^{ \pi} F_{  \lambda_n} (e^{it}) dt - \frac{1}{ \mid \mid k_{ \lambda_n}^b \mid\mid_b^2} \frac{1}{ 2 \pi} \int_{ - \pi}^{ \pi} \varphi( e^{it} ) | k_{{ \lambda}_n}^b (e^{it})|^2\\
&= \frac{1}{2 \pi} \int_{ -\pi}^{ \pi} \varphi( \zeta) F_{ \lambda_n }( e^{ it} ) dt - \frac{1}{2 \pi} \int_{ -\pi}^{ \pi} \varphi( e^{ it} ) F_{ \lambda_n }( e^{ it} ) dt\\
&= \frac{1}{2 \pi} \int_{ \mathbb{T}} ( \varphi( \zeta) - \varphi( e^{it})    ) F_{ \lambda_n }( e^{it} ) dt\\
&= \frac{1}{2 \pi} \int_{ | t - \alpha | \leq \delta} ( \varphi( \zeta) - \varphi( e^{it})    ) F_{ \lambda_n }( e^{it} ) dt + \frac{1}{2 \pi} \int_{ | t - \alpha | \geq \delta} ( \varphi( \zeta) - \varphi( e^{it})    ) F_{ \lambda_n }( e^{it} ) dt  . 
\end{align*}
The first integral can be made small by the continuity of $\varphi$. Once $\delta >0$ is fixed the second term goes to zero since $  \sup_{ | t - \alpha| \geq \delta} F_{ \lambda_n} (e^{it}) \rightarrow 0 \hspace{0.2 cm} as \hspace{0.1 cm} n \rightarrow \infty . $
In addition
\begin{eqnarray*}
\int_{ -\pi}^{ \pi} F_{ \lambda_n }( e^{ it} ) dt &=& \frac{1 - | \lambda_n|^2}{1 - | b(\lambda_n)|^2} \int_{ -\pi}^{ \pi} \frac{|1 - \overline{ b( \lambda_n)} b(e^{it})|^2}{|1 - \overline{ {\lambda}_n} e^{ it}|^2}dt\\
& \geq & \frac{1 - | \lambda_n|^2}{1 - | b(\lambda_n)|^2} (1 -  |b(\lambda_n)|^2 ) \int_{ -\pi}^{ \pi} \frac{1}{|1 - \overline{ {\lambda}_n} e^{ it}|^2}dt\\
& = &  \frac{(1 - | \lambda_n|^2)(1 - |b(\lambda_n)|^2) }{(1 - |b(\lambda_n)|)(1 + |b(\lambda_n) |)}  \frac{1}{|1 - |{ \lambda}_n|^2}\\
&= & \frac{1 -|b({\lambda}_n)| }{1 +|b({\lambda}_n)|} \geq  \frac{1 - c}{2} >  0.
\end{eqnarray*}
Furthermore, in one hand
$$ \frac{||k_{ \lambda_n}^b ||_2}{||k_{ \lambda_n}^b ||_b} \leq 1.$$
And in the other hand 
the sequence $ (\frac{k_{ \lambda_n}^b}{||k_{ \lambda_n}^b||_b})_n$ converges weakly to $0$, because $| \lambda_n| \rightarrow 1$ and $ | b( \lambda_n ) | \rightarrow c $ with $c < 1$.
 Indeed, using that
$$ ||k_{ \lambda_n}^b||_b^2 = \frac{1 - | b( { \lambda}_n)|^2}{ 1 - |{ \lambda}_n|^2}.$$
We deduce that for $f \in H^{ \infty} \cap \mathcal H(b),$
$$ | < f , \frac{k_{ \lambda_n}^b}{ || k_{ \lambda_n}^b ||_b} >_b | = \frac{ | f( { \lambda}_n)| \sqrt{1 - | \lambda_n|^2 }}{ \sqrt{1 - | b({ \lambda}_n )|^2}} \leq \frac{ || f||_{ \infty} \sqrt{1 - | \lambda_n|^2} }{\sqrt{1 - |b({ \lambda}_n )|^2  }} \rightarrow 0 \hspace{0.5 cm} as \hspace{0.5 cm} n \rightarrow \infty .$$
Furthermore, $ H^{ \infty} \cap { \mathcal{H}}(b) $  is dense ${ \mathcal{H} } (b)$, since for every $\lambda \in \mathbb{D}, \hspace{0.1 cm}  k_{ \lambda}^b \in H^{ \infty} \cap { \mathcal{H}}(b) $.
Then for every $f \in { \mathcal{H}}(b)$  and every $ \varepsilon > 0$, there exists $g \in H^{ \infty} \cap { \mathcal{H}}(b) $ such that $|| f - g ||_b < \frac{ \varepsilon}{2} .$ 
Using previous computations we see that there exists $ N_0 \in \mathbb{N}$ such that $ n \geq N_0 \Rightarrow | < g,\frac{k_{ \lambda_n}^b}{ || k_{ \lambda_n}^b ||_b}>_b | \leq \frac{ \varepsilon}{2}.$
 Hence for $ n \geq N_0$, 
\begin{eqnarray*}
  | < f,\frac{k_{ \lambda_n}^b}{ || k_{ \lambda_n}^b ||_b}>_b | &\leq & | < f-g,\frac{k_{ \lambda_n}^b}{ || k_{ \lambda_n}^b ||_b} >_b | + | < g,\frac{k_{ \lambda_n}^b}{ || k_{ \lambda_n}^b ||_b} >_b |\\
  & \leq & ||f - g||_b + | < g,\frac{k_{ \lambda_n}^b}{ || k_{ \lambda_n}^b ||_b} >_b | \\
  &\leq & \varepsilon. 
\end{eqnarray*}
Thus the sequence $  \frac{k_{ \lambda_n}^b}{ || k_{ \lambda_n}^b ||_b}$ converges weakly to 0 in ${ \mathcal{H}}(b)$, with $ {T_{\overline \varphi}}_{| \mathcal H(b)}$ considered compact.
We deduce that  
\begin{eqnarray*}
 \frac{1}{ \mid \mid k_{ \lambda_n}^b \mid\mid_b^2} | < k_{ \lambda_n}^b , T_{ \bar{\varphi} } k_{ \lambda_n}^b >_2|  &\leq & \frac{||k_{ \lambda_n}^b ||_2}{||k_{ \lambda_n}^b ||_b} \frac{||T_{ \bar{\varphi} }  k_{ \lambda_n}^b ||_2}{||k_{ \lambda_n}^b ||_b}\\
 & \leq & \frac{||T_{ \bar{\varphi} }  k_{ \lambda_n}^b ||_2}{||k_{ \lambda_n}^b ||_b} \leq \frac{||T_{ \bar{\varphi} } k_{ \lambda_n}^b ||_b}{||k_{ \lambda_n}^b ||_b}  \rightarrow 0 \hspace{0.3cm} as \hspace{0.3cm} n \rightarrow \infty.
 \end{eqnarray*}
 After all these computations, we see that 
$$ \varphi ( \zeta ) \frac{1}{2 \pi} \int_{ - \pi}^{ \pi} F_{  \lambda_n} (e^{it}) dt - \frac{1}{ \mid \mid k_{ \lambda_n}^b \mid\mid_b^2} < k_{ \lambda_n}^b , T_{ \bar{\varphi} } k_{ \lambda_n}^b >_b \rightarrow 0 \hspace{ 0.2 cm} as \hspace{0.2 cm} n \rightarrow \infty. $$
with 
$$ \int_{ -\pi}^{ \pi} F_{ \lambda_n }( e^{ it} ) dt > 0 \hspace{0.3 cm} and \hspace{0.3 cm}  \frac{1}{ \mid \mid k_{ \lambda_n}^b \mid\mid_b^2} | < k_{ \lambda_n}^b , T_{ \bar{\varphi} } k_{ \lambda_n}^b >_2| \rightarrow 0 \hspace{0.2 cm} as \hspace{0.2 cm} n \rightarrow \infty. $$
Finally, $\varphi ( \zeta ) = 0 $.\\
\end{proof}
We now present consequences of this result.
\begin{corollaire}\label{cor1-extreme}
Let $b$ be an extreme point of the closed unit ball of $ H^{ \infty}$ such that $m( \sigma (b) \cap \mathbb{T}) > 0$. Let $ \varphi \in  C( \mathbb{T}) \cap H^{ \infty}$. Then the operator 
$$\begin{array}{ccccc}
T_{ \overline{ \varphi}}  & : & { \mathcal{H} }(b) & \to & { \mathcal{H} }(b) \\
 & & f & \mapsto & P_+( \overline{ \varphi} f) \\
\end{array},$$\\
is compact if and only if $ \varphi = 0   .$\\
\end{corollaire}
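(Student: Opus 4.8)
The plan is to deduce this corollary directly from Theorem \ref{thm 2.2}, combined with the classical boundary uniqueness theorem for Hardy spaces. The reverse implication is immediate: if $\varphi = 0$ then $T_{\overline{\varphi}} = 0$, which is trivially compact. So the content is entirely in the forward implication.

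For the forward direction, the first thing I would notice is a discrepancy in the target spaces: the corollary concerns $T_{\overline{\varphi}}$ as an operator from $\mathcal{H}(b)$ \emph{into itself}, whereas Theorem \ref{thm 2.2} is about $T_{\overline{\varphi}}$ as an operator from $\mathcal{H}(b)$ \emph{into} $H^2$. The bridge is the fact recalled in the preliminaries that $\mathcal{H}(b)$ is contained contractively in $H^2$, i.e. the inclusion $\iota : \mathcal{H}(b) \hookrightarrow H^2$ is bounded. Hence, if $T_{\overline{\varphi}} : \mathcal{H}(b) \to \mathcal{H}(b)$ is compact, then the composition $\iota \circ T_{\overline{\varphi}} : \mathcal{H}(b) \to H^2$, which is just $T_{\overline{\varphi}}$ regarded as a map into $H^2$, is compact as well, since composing a compact operator with a bounded one yields a compact operator. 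Applying Theorem \ref{thm 2.2} (legitimate because $b$ is extreme and $\varphi \in C(\mathbb{T}) \cap H^\infty \subset C(\mathbb{T})$) then gives $\varphi_{|\sigma(b) \cap \mathbb{T}} = 0$.

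This is precisely where the two extra hypotheses of the corollary come into play. Since $\varphi \in H^\infty$, its boundary function lies in $H^2$, and we have just shown it vanishes on $\sigma(b) \cap \mathbb{T}$, a set which by assumption satisfies $m(\sigma(b) \cap \mathbb{T}) > 0$. The boundary uniqueness theorem for Hardy spaces asserts that a nonzero function in $H^2$ cannot have boundary values vanishing on a subset of $\mathbb{T}$ of positive Lebesgue measure; hence $\varphi = 0$. The argument is essentially immediate once these ingredients are in place, and I anticipate no serious obstacle. The only points deserving care are the passage from compactness into $\mathcal{H}(b)$ to compactness into $H^2$ via the contractive inclusion, and the observation that the hypothesis $m(\sigma(b) \cap \mathbb{T}) > 0$ is exactly what upgrades ``$\varphi$ vanishes on $\sigma(b) \cap \mathbb{T}$'' to ``$\varphi$ vanishes identically.''
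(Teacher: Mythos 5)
Your proposal is correct and follows essentially the same route as the paper: compactness into $\mathcal{H}(b)$ passes to compactness into $H^2$ via the contractive inclusion, Theorem \ref{thm 2.2} gives $\varphi_{|\sigma(b)\cap\mathbb{T}}=0$, and the boundary uniqueness theorem for $H^\infty$ functions together with $m(\sigma(b)\cap\mathbb{T})>0$ forces $\varphi=0$. You merely make explicit two steps (the inclusion argument and the uniqueness theorem) that the paper states without elaboration.
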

\begin{proof}
Assume that ${T_{\overline \varphi}}_{| \mathcal H(b)}$ is compact. Then $T_{\overline \varphi}: \mathcal{H}(b) \rightarrow H^2 $ is also compact and by Theorem \ref{thm 2.2}, 
$$ \varphi(   { \sigma}(b) \cap \mathbb{T} )  = 0   .$$
However, since $\varphi \in H^{ \infty}$ with $m( \sigma (b) \cap \mathbb{T}) > 0$ then $\varphi = 0$.
\end{proof}
\begin{corollaire}
If $b$ is an extreme point of the closed unit ball of $H^{ \infty}$, which is not an inner function, and if $\varphi \in H^{ \infty} \cap C( \mathbb{T})$, then 
$$\begin{array}{ccccc}
T_{ \overline{ \varphi}}  & : & { \mathcal{H} }(b) & \to & { \mathcal{H} }(b) \\
 & & f & \mapsto & P_+( \overline{ \varphi} f) \\
\end{array},$$
is compact if and only if $ \varphi = 0$.
\end{corollaire}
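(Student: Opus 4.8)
The plan is to reduce this corollary to Corollary \ref{cor1-extreme} by verifying that its hypothesis $m(\sigma(b)\cap\mathbb{T})>0$ is automatic once $b$ is extreme but not inner. The implication $(\Leftarrow)$ is trivial ($T_{\overline 0}=0$ is compact), so I would assume ${T_{\overline\varphi}}_{|\mathcal H(b)}$ is compact and aim at $\varphi=0$.

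First I would record the elementary measure-theoretic consequence of $b$ not being inner: since $b$ lies in the closed unit ball of $H^\infty$ but is not unimodular a.e. on $\mathbb{T}$, the boundary set $E=\{\zeta\in\mathbb{T}:|b(\zeta)|<1\}$ (using the radial, equivalently nontangential, boundary values, which exist a.e. by Fatou) satisfies $m(E)>0$. The key step is then to show $E\subseteq\sigma(b)$ up to a null set. For almost every $\zeta\in E$ the nontangential limit of $b$ at $\zeta$ has modulus $|b(\zeta)|<1$; since the $\liminf$ defining $\sigma(b)$ is taken over \emph{all} approaches $z\to\zeta$ inside $\mathbb{D}$, it is bounded above by the value along any nontangential sequence terminating at $\zeta$, whence
$$
\liminf_{\substack{z\to\zeta\\ z\in\mathbb{D}}}|b(z)|\le |b(\zeta)|<1,
$$
so that $\zeta\in\sigma(b)$. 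Consequently $m(\sigma(b)\cap\mathbb{T})\ge m(E)>0$, and since $b$ is extreme and $\varphi\in H^\infty\cap C(\mathbb{T})$, Corollary \ref{cor1-extreme} applies and forces $\varphi=0$.

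I expect the only genuine point to be this measure-theoretic inclusion. It is tempting to argue instead from the analytic-continuation statement recalled in the preliminaries, which gives $|b|=1$ on $\mathbb{T}\setminus clos(\sigma(b))$ and hence $E\subseteq clos(\sigma(b))$; but passing to the closure is dangerous here, because a null set can have full-measure closure (think of the rationals), so that route would only bound $m(clos(\sigma(b))\cap\mathbb{T})$ and not $m(\sigma(b)\cap\mathbb{T})$ itself. Comparing the $\liminf$ directly with the nontangential boundary value sidesteps the closure and delivers the inclusion into $\sigma(b)$ proper, which is exactly what Corollary \ref{cor1-extreme} requires.
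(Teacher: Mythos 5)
Your proof is correct, and at the one genuinely non-trivial step it takes a different --- and in fact tighter --- route than the paper. Both proofs reduce the statement to Corollary~\ref{cor1-extreme}; the difference lies in how positive measure is extracted from the hypothesis that $b$ is not inner. The paper argues by contraposition from the analytic-continuation result recalled in the preliminaries: if $\zeta \in \mathbb{T}\setminus clos(\sigma(b))$, then $b$ extends analytically across an arc around $\zeta$ with $|b|\equiv 1$ there, hence $E=\{\zeta\in\mathbb{T}: |b(\zeta)|\neq 1\}\subset clos(\sigma(b))\cap \mathbb{T}$ and $m(clos(\sigma(b))\cap\mathbb{T})\geq m(E)>0$, after which it invokes Corollary~\ref{cor1-extreme}. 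As you point out, this literally bounds only the measure of a closure, whereas Corollary~\ref{cor1-extreme} is stated with $m(\sigma(b)\cap\mathbb{T})>0$; positive measure of a closure is genuinely weaker (your rationals example), and the mismatch is not cosmetic here: with the paper's $\liminf$ definition of $\sigma(b)$, every point of $\mathbb{D}$ lies in $\sigma(b)$ once $b$ is non-constant, so $clos(\sigma(b))\cap\mathbb{T}$ is all of $\mathbb{T}$ and the closure bound by itself carries no information. Your argument --- comparing the $\liminf$ over all approaches $z\to\zeta$ with the a.e.-existing nontangential boundary value, which yields $E\subset\sigma(b)\cap\mathbb{T}$ up to a null set --- uses nothing beyond Fatou's theorem and lands exactly on the hypothesis the corollary requires. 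In short, the paper's route stays inside the continuation formalism already set up for Theorem~\ref{thm 2.2} but, read literally, leaves a closure-versus-set gap; your route is more elementary and closes that gap.
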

\begin{proof}
Indeed, Since $b$ is not inner, the set 
$$ E = \{  \zeta \in \mathbb{T} : |b( \zeta)| \neq 1   \}$$
has a positive Lebesgue mesure. Moreover, it turns out that  $ E \subset clos ( \sigma (b)) \cap \mathbb{T}$. Indeed, if $ \zeta \in \mathbb{T} \setminus clos ( \sigma (b)) $ then $b$ admits an analytic continuation across a neighborhood $ D( \zeta,r) = \{ w : |w - \zeta| < r \}$ of $\zeta$ with $|b| \equiv 1$ on the arc $D( \zeta, r) \cap \mathbb{T}$.\\
In particular, $ |b( \zeta)| = 1$ and $ \zeta \in \mathbb{T} \setminus E.$ We deduce that 
$$ 0 < m(E) \leq m(clos ( \sigma (b)) \cap \mathbb{T})).$$ 
To conclude the proof, it remains to apply Corollary~\ref{cor1-extreme}. 
\end{proof}
In the inner case, depending on the spectrum, we could have non trivial compact co-analytic Toeplitz operators or not. 
\begin{exmp}
Let $( { \zeta}_n )_n \subset \mathbb{T}$ be a sequence dense  in $ \mathbb{T}$, and let $ \mu = \sum_{n \geq 1} \frac{1}{n^2} { \delta}_{ {\zeta}_n} $ and $\Theta$ be the singular inner function associated to $\mu$. Then by the definition of the spectrum of $ \Theta$ we have that $ \sigma ( \Theta) = supp( \mu) = \mathbb{T}$.  Hence,  we can apply Theorem~\ref{thm 2.2}
 to get that if $T_{\overline\varphi}:K_\Theta\longrightarrow K_\Theta$ is compact, then $\varphi=0$. 
\end{exmp}
\begin{exmp}
Let $ \Theta (z) = e^{- \frac{1+z}{1-z}}$. Then $ \sigma ( \Theta ) = \{ 1  \} $ and $m( \sigma (\Theta) \cap \mathbb{T})) = 0.$ Let us now consider $ \varphi \in \mathcal{A}( \mathbb{D}) = Hol( \mathbb{D}) \cap C( \overline{ \mathbb{D} }) $ such that $ \varphi \neq 0$ and $ { \varphi}(1)= 0.$ Then by Theorem \ref{thm 2.2} (or Ahern-Clark's result) we know that $T_{ \overline{ \varphi}} : K_{ \Theta} \rightarrow K_{ \Theta}$ is compact.\\
\end{exmp}
We conclude that when $b$ is not an inner fonction, there are no compact Toeplitz operators ${T_{\overline \varphi}}_{| \mathcal H(b)} $ with $\varphi \in H^{ \infty} \cap C( \mathbb{T})$ (expect when $\varphi = 0$).

\section{ Hypercyclicity of ${T_{\overline \varphi}}_{| \mathcal H(b)}$}


Following the approach of Godefroy-Shapiro, we generalize Theorem \ref{thm 2.6} to operators $T_{\overline{ \varphi}} : \mathcal{H}(b) \rightarrow \mathcal{H}(b) $ when $b$ is a non extreme point of the closed unit ball of $H^{ \infty}$ and $\varphi \in H^{ \infty}.$
Moreover, using a more sophistical criterion due to S.Griveaux we prove that in the non-extreme case, for every $ | \lambda| > 1$ the operator $ \lambda X_b = \lambda S_{| \mathcal{H}(b)}^* $ is not only hypercyclic but frequently hypercyclic.
On the contrary when $b$ is extreme, $ \lambda X_b$ is never hypercyclic.

The following lemma plays a key role in our approach. It generalizes the completeness of the set of all reproducing kernels of $H^2$, $k_{ \lambda}$ for every $ \lambda \in \mathbb{D}$ in $ \mathcal{H}(b)$, when $b$ is non-extreme \cite{MR847333}.
\begin{lemme}\label{lemma 3.1}
Let $b$ be a non-extreme point of the closed unit ball of $ H^{ \infty}$, and let $E$ be a set of $\mathbb{D}$ with an accumulation point in $ \mathbb{D}$. Then $span \{k_{z} : z \in E   \}$ is dense in ${ \mathcal{H}} (b)$.
\end{lemme}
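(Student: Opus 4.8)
The plan is to establish density via the usual Hilbert-space duality: it suffices to show that the orthogonal complement of $\mathrm{span}\{k_z : z \in E\}$ in $\mathcal{H}(b)$ is trivial. So I would take $f \in \mathcal{H}(b)$ satisfying $\langle f, k_z\rangle_b = 0$ for every $z \in E$, and aim to prove $f = 0$. The subtlety to keep in mind from the outset is that $k_z$ here is the Cauchy kernel of $H^2$, \emph{not} the reproducing kernel $k_z^b$ of $\mathcal{H}(b)$, so one does \emph{not} have $\langle f, k_z\rangle_b = f(z)$.

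The crucial first step is to produce a usable expression for $\langle f, k_z\rangle_b$. Since $b$ is non-extreme, $k_z \in \mathcal{H}(b)$ for every $z \in \mathbb{D}$, and the computation already carried out in the proof of Theorem~\ref{thm 2.1} gives $(k_z)^+ = \tfrac{\overline{b(z)}}{\overline{a(z)}}\,k_z$ (here $a$ is the outer function with $(a,b)$ a pair, hence $a$ is zero-free on $\mathbb{D}$, so this makes sense). Applying Theorem~\ref{thm 1.2} with $g = k_z$ then yields
$$
\langle f, k_z\rangle_b = \langle f, k_z\rangle_2 + \langle f^+, (k_z)^+\rangle_2 = f(z) + \frac{b(z)}{a(z)}\,f^+(z).
$$
The point I want to stress is that, written this way, the quantity $G(z) := \langle f, k_z\rangle_b$ is a \emph{holomorphic} function of $z$ on $\mathbb{D}$: indeed $f$ and $f^+$ lie in $H^2$ and are holomorphic, $b \in H^\infty$ is holomorphic, and $a$ is outer hence non-vanishing on $\mathbb{D}$, so $b/a$ is holomorphic.

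With $G$ in hand, the rest is an analytic continuation argument. The hypothesis says $G$ vanishes on $E$, a set with an accumulation point in $\mathbb{D}$; by the identity principle for holomorphic functions, $G \equiv 0$ on all of $\mathbb{D}$, that is, $\langle f, k_z\rangle_b = 0$ for \emph{every} $z \in \mathbb{D}$. At this stage I would invoke the known completeness of the full family $\{k_z : z \in \mathbb{D}\}$ in $\mathcal{H}(b)$ in the non-extreme case \cite{MR847333}, which immediately forces $f = 0$ and closes the argument. (Alternatively, one could avoid citing the base case and argue directly: $G \equiv 0$ reads $a f + b f^+ \equiv 0$, which combined with the relation $T_{\bar b}f = T_{\bar a}f^+$ from Theorem~\ref{thm 1.2} and $|a|^2+|b|^2 = 1$ a.e.\ on $\mathbb{T}$ pins down $f$; but citing \cite{MR847333} is cleaner.)

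The main obstacle — really the only non-routine ingredient — is precisely the first step. Because $\langle f, k_z\rangle_b \neq f(z)$ in general, it is not a priori clear that $z \mapsto \langle f, k_z\rangle_b$ is holomorphic rather than merely real-analytic, and without holomorphy the identity principle is unavailable. What rescues the argument is the explicit formula $f(z) + (b(z)/a(z))f^+(z)$ furnished by Theorem~\ref{thm 1.2}, together with the zero-freeness of the outer function $a$, which simultaneously shows $G$ is well defined and holomorphic in $z$; everything else is a direct application of the identity principle and the base completeness result.
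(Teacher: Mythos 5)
Your proposal is correct, and up to the identity-principle step it coincides with the paper's own proof: both start from $f \perp \mathrm{span}\{k_z : z \in E\}$ in $\mathcal{H}(b)$, both use the formula $\langle f, k_z\rangle_b = f(z) + \frac{b(z)}{a(z)}f^+(z)$ coming from Theorem~\ref{thm 1.2} together with $(k_z)^+ = \frac{\overline{b(z)}}{\overline{a(z)}}\,k_z$, and both use holomorphy to propagate the vanishing from $E$ to all of $\mathbb{D}$ (the paper clears the denominator and applies the identity principle to $af + bf^+$ rather than to $f + (b/a)f^+$, which is the same thing since the outer function $a$ is zero-free on $\mathbb{D}$). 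The two routes part ways only at the endgame. You invoke the known completeness of the full family $\{k_z : z \in \mathbb{D}\}$ in $\mathcal{H}(b)$ from \cite{MR847333}; this is legitimate, since the paper itself cites that result as the statement being generalized, and it turns your proof into a clean reduction of the general case to the base case $E = \mathbb{D}$. The paper instead stays self-contained: from $af + bf^+ \equiv 0$ on $\mathbb{D}$, hence a.e.\ on $\mathbb{T}$, it multiplies by $\bar b$, uses $|a|^2 + |b|^2 = 1$ a.e.\ together with the relation $P_+(\bar b f - \bar a f^+) = 0$ to place $f^+/a$ simultaneously in $H^2$ (by outerness of $a$) and in $\overline{H_0^2}$, forcing $f^+ = 0$ and then $f = 0$. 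This is precisely the argument you sketch in your parenthetical remark, so the difference is only whether that final function-theoretic step is executed in full or outsourced to the citation; your version is shorter but depends on the external result, while the paper's is longer and independent of it.
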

\begin{proof}
Let $ f \in  { \mathcal{ H}}(b)$ such that $f \perp span \{ k_z, z \in E \}$ in $ { \mathcal{ H}}(b)$. Then
 $$ \forall z \in E, \hspace{0.1 cm} <f , k_z  >_b = 0. $$
 Hence
 $$ \forall z \in E, \hspace{0.1 cm} f(z) + \frac{b(z)}{a(z)} f^+ (z) =0, $$ 
 where $f^+ \in H^2 $ is such that $T_{ \bar{b}} f = T_{ \bar{a}} f^+  \hspace{0.1 cm} (see \hspace{0.1cm} Theorem \hspace{0.1cm} \ref{thm 1.2})$.
 We get
 $$ \forall z \in E, \hspace{0.1 cm} a(z) f(z) + b(z) f^+ (z) = 0.$$
 
But since $ af + bf^+ \in Hol( \mathbb{D})$ and $af +bf^+$ vanishes on the set $E$ with an accumulation point in $ \mathbb{D}$, then  $ af + bf^+=0 \hspace{0.1cm} on \hspace{0.1cm} \mathbb{D}$, and this is equivalent to $ af + bf^+  = 0$ on $\mathbb{T}$, multiplying this equality by $ \bar{b}$ and using the identity $ |a|^2 + |b|^2 = 1$ a.e. on $\mathbb{T}$, we obtain
\begin{equation}\label{eq 2}
  a( \bar{b} f - \bar{a} f^+ ) = - f^+ .
  \end{equation}
The relation $ T_{ \bar{b}}f = T_{ \bar{a}} f^+ $ can be written as $ P_+ ( \bar{b} f  - \bar{a}f^+) = 0 $, which means that the function $\bar{b} f  - \bar{a}f^+ \in \overline{ H_0^2}$, with $\overline{ H_0^2} = \{ f \in L^2( \mathbb{T}): \hat{f}(n)=0, n \geq 0   \}$. In particular by $ \eqref{eq 2}$ we deduce that $ f^+ / a \in L^2( \mathbb{T})$. Now, on one hand, it follows that $ f^+ / a \in H^2$  (see \cite[corollary 4.28]{MR3617311}) because $a$ is outer. On the other hand, $ \eqref{eq 2}$ also implies that $ f^+ / a \in \overline{ H_0^2}$, whence $ f^+ / a = 0$. That is $ f^+ = 0 $ and then $f = 0$.
\end{proof}
\begin{theorem}
Let $b$ be a non-extreme point of the closed unit ball of $ H^{ \infty}$ and let $ \varphi \in H^{ \infty}$. Then the operator
  $$\begin{array}{ccccc}
T_{ \bar{ \varphi }}  & : & { \mathcal{H}} (b) & \to & {\mathcal{ H}}(b) \\
 & & f & \mapsto & T_{ \bar{ \varphi }} f = P_+ ( \bar{ \varphi} f ). \\
\end{array}$$\\
 is hypercyclic if and only if $\varphi$ is non-constant and $ \varphi ( \mathbb{D}) \cap \mathbb{T} \neq \emptyset.$
\end{theorem}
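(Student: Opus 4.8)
My plan is to deduce both implications from the Godefroy--Shapiro criterion (Theorem~\ref{thm 2.4}) and the completeness Lemma~\ref{lemma 3.1}, mimicking the classical case $b=0$ (Theorem~\ref{thm 2.6}). The decisive structural fact is that, because $b$ is non-extreme, every Cauchy kernel $k_\lambda$ of $H^2$ belongs to $\mathcal H(b)$, and it is an eigenvector of the restricted operator: since $T_{\bar\varphi}$ on $\mathcal H(b)$ is given by the same formula $f\mapsto P_+(\bar\varphi f)$ as on $H^2$, the identity $T_{\bar\varphi}k_\lambda=\overline{\varphi(\lambda)}\,k_\lambda$ persists in $\mathcal H(b)$. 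I thus dispose of a whole family of eigenvectors whose eigenvalue $\overline{\varphi(\lambda)}$ has modulus $|\varphi(\lambda)|$, and Lemma~\ref{lemma 3.1} will be the tool converting a family of such kernels into a dense subspace.

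For the sufficiency I would assume $\varphi$ non-constant with $\varphi(\mathbb D)\cap\mathbb T\neq\emptyset$. By the open mapping theorem $\varphi(\mathbb D)$ is open, so, meeting $\mathbb T$, it contains points of modulus $<1$ and of modulus $>1$; hence
\[
\Omega_-=\{\lambda\in\mathbb D:|\varphi(\lambda)|<1\},\qquad
\Omega_+=\{\lambda\in\mathbb D:|\varphi(\lambda)|>1\}
\]
are non-empty open subsets of $\mathbb D$. Each then has an accumulation point in $\mathbb D$, so Lemma~\ref{lemma 3.1} makes both $\operatorname{span}\{k_\lambda:\lambda\in\Omega_-\}$ and $\operatorname{span}\{k_\lambda:\lambda\in\Omega_+\}$ dense in $\mathcal H(b)$. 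As these kernels are eigenvectors with eigenvalues $\overline{\varphi(\lambda)}$ of modulus respectively $<1$ and $>1$, the hypotheses of Theorem~\ref{thm 2.4} hold and $T_{\bar\varphi}$ is hypercyclic.

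For the necessity I argue by contraposition. If $\varphi\equiv c$ is constant, then $T_{\bar\varphi}=\bar c\,I$ is scalar, and since $\mathcal H(b)$ is infinite-dimensional its orbits lie in the line $\mathbb C x$ and cannot be dense; so a hypercyclic symbol must be non-constant. Suppose now $\varphi(\mathbb D)\cap\mathbb T=\emptyset$. Being connected and avoiding $\mathbb T$, $\varphi(\mathbb D)$ lies either in $\mathbb D$ or in $\{|w|>1\}$. In the first case $\|\varphi\|_\infty\le1$, whence $\|T_{\bar\varphi}\|_{L(\mathcal H(b))}\le\|\varphi\|_\infty\le1$, so every orbit is bounded by its initial vector and cannot be dense. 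In the second case $\psi=1/\varphi\in H^\infty$ with $\|\psi\|_\infty\le1$, and the multiplicativity $T_{\bar\varphi}T_{\bar\psi}=T_{\overline{\varphi\psi}}=I$ shows $T_{\bar\varphi}$ is invertible with $\|T_{\bar\varphi}^{-1}\|\le1$; consequently $\|T_{\bar\varphi}^n x\|\ge\|x\|$ for all $n$, so every orbit avoids the non-empty open ball $\{y:\|y\|<\|x\|\}$ and again fails to be dense. Either way $T_{\bar\varphi}$ is not hypercyclic.

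The substance of the argument, and the only place the non-extreme hypothesis is genuinely exploited, is the production of a dense set of eigenvectors, resting entirely on $k_\lambda\in\mathcal H(b)$ and Lemma~\ref{lemma 3.1}. I expect the remaining points to be routine bookkeeping: confirming via the open mapping theorem that $\Omega_\pm$ are non-empty, and checking in the necessity that the two geometric alternatives are exhaustive and force bounded, respectively bounded-below, orbits. This is also exactly where the extreme case would break down, since there the kernels $k_\lambda$ need not belong to $\mathcal H(b)$.
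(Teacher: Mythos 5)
Your proof is correct and follows essentially the same route as the paper: the kernels $k_\lambda$ as eigenvectors of $T_{\bar\varphi}$ with eigenvalues $\overline{\varphi(\lambda)}$, Lemma~\ref{lemma 3.1} applied to the sets where $|\varphi|<1$ and $|\varphi|>1$ to feed the Godefroy--Shapiro criterion (Theorem~\ref{thm 2.4}), and the connectedness dichotomy for the necessity. The only (harmless) deviation is in the case $\varphi(\mathbb D)\subset\{|w|>1\}$, where you bound the orbit below by $\|x\|$ directly, whereas the paper invokes the fact that an invertible operator is hypercyclic if and only if its inverse is.
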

\begin{proof}
Since $b$ is non-extreme, the reproducing kernel of $ H^2, k_z$ at $z \in \mathbb{D}$ is an element of ${\mathcal{ H}}(b) \hspace{0.1cm}$. Moreover $ k_{z}$ is an eigenvector of ${T_{ \bar{ \varphi }}}_{\mid {\mathcal{ H}}(b) }$,with associated eigenvalue $\overline{ \varphi( z)} $. Indeed, we have 
$$ <f,  T_{ \bar{ \varphi }} ( k_{ z}) >_2 = < \varphi f, k_z   >_2 = \varphi (z) f(z) =<f, \overline{ \varphi (z) } k_z  >_2  $$\\
for all $ f \in H^2$, so that $ T_{ \bar{ \varphi }} ( k_{ z}) = \overline{ \varphi ( z)} k_{ z}.$ Hence for every $z \in \mathbb{D}$, $ k_z \in \ker ( {T_{ \bar{ \varphi }}}_{\mid {\mathcal{ H}}(b) } - \overline{ \varphi ( z)}).$
Let $ U = \{ z \in \mathbb{D}; | \varphi (z) | < 1 \}$ and $ V = \{ z \in \mathbb{D}; | \varphi (z) | > 1 \}$. If $\varphi$ is non-constant and $ \varphi ( \mathbb{D}) \cap \mathbb{T} \neq \emptyset ,$ the open sets $U$ and $V$ are both non-empty by the open mapping theorem for anaytic functions. In view of Theorem \ref{thm 2.4}, it is enough to notice that since $U$ and $V$ are open sets of $\mathbb{D}$, they have accumulations in $\mathbb{D}$ and by lemma \ref{lemma 3.1}, $  span \{ k_z, z \in U \}$ and $  span \{ k_z, z \in V \}$ are dense in $ { \mathcal{ H}}(b) .$ Therefore $ {T_{ \bar{ \varphi }}}_{\mid {\mathcal{ H}}(b) }$ is hypercyclic.\\

Conversely, assume that $ {T_{ \bar{ \varphi }}}_{\mid {\mathcal{ H}}(b) }$ is hypercyclic ( so that $ \varphi$ is certainly non constant). And assume that $ \varphi ( \mathbb{D}) \cap \mathbb{T} = \emptyset.$ Since $\mathbb{D}$ is connected with $\varphi$ continuous on $\mathbb{D}$, $\varphi( \mathbb{D})$ is connected, hence $ \varphi( \mathbb{D}) \subset \mathbb{D}$ or $ \varphi( \mathbb{D}) \subset \mathbb{C} \setminus \overline{ \mathbb{D}}.$
If $ \varphi( \mathbb{D}) \subset \mathbb{D} $ then $ \forall z \in \mathbb{D}, | \varphi(z)| < 1 $, it implies that $ || \varphi||_{ \infty} \leq 1$ and $ ||{T_{ \bar{ \varphi }}}_{ \mid { \mathcal{ H}}(b) } || \leq || \varphi||_{ \infty} \leq 1$, whence $ {T_{ \bar{ \varphi }}}_{\mid {\mathcal{ H}}(b) } $ is non-hypercyclic (absurd).
If $ \varphi( \mathbb{D}) \subset \mathbb{C} \setminus \mathbb{D} $ then $ \forall z \in \mathbb{D}, | \varphi(z)| > 1 $. In this case, $ \frac{1}{ \varphi} \in H^{ \infty} $ and $ T_{ \frac{1}{ \bar{ \varphi}}} : { \mathcal{ H}}(b)  \longrightarrow { \mathcal{ H}}(b) $ is non-hypercyclic since $||T_{ \frac{1}{ \bar{ \varphi}} }  || \leq || \frac{1}{ \bar{ \varphi}}||_{ \infty} \leq 1.$ Seeing that, $ T_{ \bar{ \varphi}} T_{ \frac{1}{ \bar{ \varphi}}} = T_{ \frac{1}{ \bar{ \varphi}}} T_{ \bar{ \varphi}} = I$, then $ T_{ \frac{1}{ \bar{ \varphi}}} = ( T_{ \bar{ \varphi}})^{-1}$, consequently $ {T_{ \bar{ \varphi }}}_{ \mid { \mathcal{ H}}(b) }$ is non-hypercyclic( indeed an invertible operator is hypercyclic if and only if its inverse is hypercyclic \cite[ page 3]{MR2533318} ). We get also a contradiction.
\end{proof} 
\begin{remarque}
Note that when $b = 0$, we recover Theorem \ref{thm 2.6} of Godefroy and Shapiro.
\end{remarque}
In the particular case when $ \varphi (z) = z$, corresponding to operator $X_b : \mathcal{H}(b) \rightarrow \mathcal{H}(b),$ $X_b (f) = S^* f,$ we get a better result using Theorem $\ref{thm 2.5}$.
\begin{theorem}
Let $b$ be a non-extreme point of the closed unit ball of $ H^{ \infty}$. Let the operator
  $$\begin{array}{ccccc}
X_b  & : & { \mathcal{H}} (b) & \to & {\mathcal{ H}}(b) \\
 & & f & \mapsto & S^* f  \\
\end{array}.$$\\
For all $ | \lambda | >1 ,   \lambda X_b$ is frequently hypercyclic (in particular hypercyclic).
\end{theorem}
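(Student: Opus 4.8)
The plan is to apply Grivaux's criterion (Theorem~\ref{thm 2.5}) to the operator $T=\lambda X_b$, using as eigenvectors the Cauchy kernels $k_z$ of $H^2$, which belong to $\mathcal{H}(b)$ precisely because $b$ is non-extreme. A direct computation (the same one carried out in the proof of the preceding theorem, with $\varphi(z)=z$) gives $X_b k_z=T_{\bar z}k_z=\bar z\,k_z$, so $k_z$ is an eigenvector of $\lambda X_b$ with eigenvalue $\lambda\bar z$. To make these eigenvalues unimodular I would restrict $z$ to the circle $C=\{z\in\mathbb{D}:|z|=1/|\lambda|\}$, which is a compact subset of $\mathbb{D}$ since $|\lambda|>1$; for $z\in C$ the eigenvalue $\lambda\bar z$ has modulus $|\lambda|\,|z|=1$.

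Next I would fix a sequence $(z_i)_{i\geq 1}$ of pairwise distinct points that is dense in $C$, and set $u_i=k_{z_i}$ with $\lambda_i=\lambda\overline{z_i}$. Condition (1) of Theorem~\ref{thm 2.5} is then immediate: the $z_i$ are distinct points on a fixed circle and $z\mapsto\lambda\bar z$ is injective, so the $\lambda_i$ are distinct and all of modulus $1$. Condition (2), the density of $\mathrm{span}\{k_{z_i}:i\geq 1\}$ in $\mathcal{H}(b)$, follows directly from Lemma~\ref{lemma 3.1}, because $\{z_i\}$ is an infinite subset of the compact set $C$ and hence possesses an accumulation point, which lies on $C\subset\mathbb{D}$.

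The remaining and main point is the clustering condition (3): for every $i$ and every $\varepsilon>0$ one must find $n\neq i$ with $\|k_{z_n}-k_{z_i}\|_b<\varepsilon$. Since $(z_i)$ is dense in $C$ and $z_i\in C$, there are infinitely many $z_n$ arbitrarily close to $z_i$, so it suffices to prove that $z\mapsto k_z$ is continuous from $\mathbb{D}$ into $\mathcal{H}(b)$. For this I would write
$$\|k_z-k_w\|_b^2=\|k_z\|_b^2-2\,\mathrm{Re}\,\langle k_z,k_w\rangle_b+\|k_w\|_b^2,$$
and compute the $\mathcal{H}(b)$-inner product of the $H^2$-kernels explicitly. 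Recalling from the proof of Theorem~\ref{thm 2.1} that $k_z^+=\frac{\overline{b(z)}}{\overline{a(z)}}\,k_z$, Theorem~\ref{thm 1.2} yields
$$\langle k_z,k_w\rangle_b=\left(1+\frac{\overline{b(z)}}{\overline{a(z)}}\cdot\frac{b(w)}{a(w)}\right)\langle k_z,k_w\rangle_2.$$
Because $a$ is outer it is zero-free on $\mathbb{D}$, so the right-hand side is a jointly continuous function of $(z,w)$ on $\mathbb{D}\times\mathbb{D}$; hence $\|k_z-k_w\|_b^2\to 0$ as $w\to z$, which is exactly the continuity needed, and (3) follows.

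Having verified all three hypotheses, Theorem~\ref{thm 2.5} gives that $\lambda X_b$ is frequently hypercyclic, in particular hypercyclic, for every $|\lambda|>1$. I expect the explicit computation of $\langle k_z,k_w\rangle_b$, and thereby the norm-continuity of $z\mapsto k_z$ into $\mathcal{H}(b)$ that underlies the clustering condition, to be the only genuinely technical step; conditions (1) and (2) are essentially direct consequences of the geometry of the circle $C$ together with Lemma~\ref{lemma 3.1}.
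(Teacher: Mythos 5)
Your proof is correct and takes essentially the same route as the paper: Grivaux's criterion (Theorem~\ref{thm 2.5}) applied to the Cauchy kernels $k_z$ with $|z|=1/|\lambda|$ (so the eigenvalues $\lambda\bar z$ are unimodular and distinct), density of their span via Lemma~\ref{lemma 3.1}, and the clustering condition via norm-continuity of $z\mapsto k_z$ from $\mathbb{D}$ into $\mathcal{H}(b)$. The only cosmetic difference is in verifying that continuity: you compute $\langle k_z,k_w\rangle_b=\bigl(1+\tfrac{\overline{b(z)}\,b(w)}{\overline{a(z)}\,a(w)}\bigr)\langle k_z,k_w\rangle_2$ explicitly and use joint continuity, whereas the paper combines weak convergence of $k_{z_n}$ with convergence of the norms $\|k_{z_n}\|_b$; both verifications are valid.
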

\begin{proof}
We know that ${ \sigma}_p ( \lambda X_b) = B( 0 , | \lambda|)$ (see \cite[Theorem 24.13]{MR3617311}). The eigenvectors of $ { \lambda} X_b $ associated with eigenvalues  $ \mu \in \mathbb{T}$ are the $ k_{ \overline{ \gamma}},$ with $ \gamma =  \frac{\mu}{\lambda}$. Indeed, $ { \lambda} X_b k_{ \overline{ \gamma}} = \mu k_{ \overline{ \gamma}}.$
Take a sequence $ ( \mu_n )_{ n \geq 1} \subset \mathbb{T}$ which is dense in $ \mathbb{T}$ and without isolated points.
So $ (k_{ \overline{{ \gamma}_n}})_{ n \geq 1 }$ with $ { \gamma}_n = \frac{ {  \mu}_n }{\lambda} $ is a sequence of eigenvectors  of $ \lambda X_b$ associated with eigenvalues $( \mu_n )_n \subset \mathbb{T}$.
Moreover, since the sequence $ \{ \overline{{ \gamma}_n}, n \geq 1 \}$ has an accumulation point on $ \mathbb{D}$ (because $ | \overline{{ \gamma}_n}| = \frac{1}{| \lambda| } < 1),$ lemma \ref{lemma 3.1} implies that $ span \lbrace k_{ \overline{{ \gamma}_n}}, n \geq 1 \rbrace$ is dense in ${ \mathcal{H}}(b) $.
To apply Theorem \ref{thm 2.5}, it remains to show that for every $ \varepsilon > 0$ and every $n \geq 1$, there exists $m \neq n$ such that $||k_{  \overline{{ \gamma}_n}} - k_{ \overline{{ \gamma}_m}} ||_b < \varepsilon$.
 Notice that the map 
$$\begin{array}{ccccc}
 &  & \mathbb{D} & \to & { \mathcal{H}}{(b)} \\
 & & \mu & \mapsto & k_{ \mu}   \\
\end{array},$$ is continuous on $ \mathbb{D}.$\\
Indeed, let $z_0 \in \mathbb{D}$ and let $ ({z_n})_n$ a sequence of $ \mathbb{D}$ such that $ z_n \rightarrow z_0$ on $\mathbb{D}$. For every $ f \in { \mathcal{H}}{(b)}  $, we have \\
$$ <f, k_{z_n}>_b= f(z_n)+ \frac{b(z_n)}{a(z_n)}f^+ (z_n) , $$
where $f^+ \in H^2$ is such that $ T_{ \bar{b}} f = T_{ \bar{a}} f^+$ (see Theorem \ref{thm 1.2}).
Hence 
$$ < f, k_{z_n}>_b \longrightarrow f(z_0)+ \frac{b(z_0)}{a(z_0)}f^+ (z_0) \hspace{0.3 cm} as \hspace{0.2 cm} n \rightarrow  \infty.$$\\
Therefore
$$ < f, k_{z_n}>_b \longrightarrow < f, k_{z_0}>_b \hspace{0.3 cm} as \hspace{0.2 cm} n \rightarrow  \infty.$$
In other words, the sequence $({k_{z_n}})_n$ weakly converges to $ k_{z_0}$ in ${ \mathcal{H}}{(b)} $.\\
\\
On the other hand, using \cite[ Corollary 23.25]{MR3617311}, we also have
$$  ||k_{z_n}||_b = \frac{1}{ 1- |z_n|^2}( 1 + \frac{ |b (z_n)|^2}{ |a(z_n)|^2}) ,    $$
which gives that 
$$ ||k_{z_n}||_b \longrightarrow ||k_{z_0}||_b    \hspace{0.3 cm} as \hspace{0.2 cm} n \rightarrow  \infty. $$
A classical computation in Hilbert space setting shows now that 
$$ || k_{z_n} - k_{z_0}||_b \longrightarrow 0  \hspace{0.3 cm} as \hspace{0.2 cm} n \rightarrow  \infty, $$
which proves the continuity of the map $ \mu \rightarrow k_{ \mu }$ on $ \mathbb{D}$.\\
In particular, for $ \varepsilon > 0$ and $n \geq 1 ,$
\begin{equation}\label{eq 3} 
  \exists \delta_{ \varepsilon} > 0 ; \hspace{0.1 cm} \forall \mu \in \mathbb{D}, \hspace{0.1 cm}| \mu - \overline{{ \gamma}_n} | < { \delta}_{ \varepsilon} \Rightarrow || k_{ \mu} - k_{\overline{{ \gamma}_n}}||_b <\varepsilon.
  \end{equation}
\\
Since $ ({{ \mu}_m})_m$ is dense in $\mathbb{T}$, without isolated points, we can find $m \neq n$ such that $ | \overline{ { \mu}_m} -\overline{ { \mu}_n}  |   \leq \lambda { \delta}_{ \varepsilon},$ which gives by $ \eqref{eq 3}$: $|| k_{ \overline{{ \gamma}_m}} - k_{ \overline{{ \gamma}_n}}||_b < \varepsilon .$ Hence according to Theorem \ref{thm 2.5}, for every $| \lambda| > 1, \lambda X_b$ is frequently hypercyclic.
\end{proof}

As we saw in the previous Theorem, for all $ | \lambda| > 1 ,\lambda X_b$ is hypercyclic, so this naturally raises the question of finding a common hypercyclic vector for $ ( \lambda X_b )_{ | \lambda| > 1 } .$ We will apply Shkarin's Theorem \ref{thm 1.7} but we need to introduce another operator on $ \mathcal{H}(b)$.

It is well known that $ {\mathcal{ H}}(b)$ is invariant under the unilateral forward shift operator $S$ if and only if $b$ is non-extreme \cite[Corollary 20.20]{MR3617311}. In that case, the mapping
$$\begin{array}{ccccc}
S_b  & : & { \mathcal{H}} (b) & \to & {\mathcal{ H}}(b) \\
 & & f & \mapsto & S f= zf  
\end{array}.$$
gives a well-defined operator. Moreover $S_b$ is bounded on ${\mathcal{ H}}(b)$ with $ ||S_b|| =  \sqrt{1+|a(0)|^{-2}}\|S^*b\|_b^2$ (see \cite[Section 24.1]{MR3617311}) In particular, we see that except in the case when $b=0$ (corresponding to $\mathcal H(b)=H^2$), the operator $S_b$ has a norm strictly greater than $1$. 

\begin{theorem}
Let $b$ be a non-extreme point of the closed unit ball of $ H^{ \infty}$, and let 
  $$\begin{array}{ccccc}
X_b  & : & { \mathcal{H}} (b) & \to & {\mathcal{ H}}(b) \\
 & & f & \mapsto & S^* f  
\end{array}.$$
Then, 
$$ \mathcal{G} = \bigcap HC( \lambda X_b ; || S_b|| <  | \lambda | <  \infty ),$$
is a dense $ G_{ \delta}$-set of ${\mathcal{ H}}(b)$.
\end{theorem}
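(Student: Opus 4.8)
The plan is to apply Shkarin's Theorem~\ref{thm 1.7} with $T = X_b$ and with the parameters $a = 0$ and $c = \|S_b\|^{-1}$. Since $\|S_b\| \geq 1$ we have $0 < c \leq 1$, and the target range $c^{-1} < |\lambda| < a^{-1}$ becomes exactly $\|S_b\| < |\lambda| < \infty$, so the conclusion of Theorem~\ref{thm 1.7} reads precisely that $\mathcal{G}$ is a dense $G_{\delta}$-set. First I would record that $\mathcal{H}(b)$ is a separable, infinite-dimensional Hilbert space (hence a Fréchet space), so the ambient hypotheses hold. It then remains, for each pair $\alpha,\beta$ with $0 < \alpha < \beta < \|S_b\|^{-1}$, to produce a dense subset $E \subset \mathcal{H}(b)$ and a map $S : E \to E$ with $X_b S x = x$, $\alpha^{-n} X_b^n x \to 0$ and $\beta^n S^n x \to 0$ for every $x \in E$.

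The natural choice is $S = S_b$, the restriction to $\mathcal{H}(b)$ of multiplication by $z$, which is well defined and bounded precisely because $b$ is non-extreme. Two of the three conditions are then essentially free. The identity $X_b S_b = S^* S = I$ on all of $\mathcal{H}(b)$ gives the right-inverse relation $X_b S_b x = x$. Moreover the third condition holds for every $x \in \mathcal{H}(b)$: since $\beta < \|S_b\|^{-1}$ we have $\beta\|S_b\| < 1$, whence $\|\beta^n S_b^n x\|_b \leq (\beta\|S_b\|)^n \|x\|_b \to 0$. Thus the whole difficulty is concentrated in the second condition, i.e.\ in finding a dense, $S_b$-invariant set $E$ on which $\alpha^{-n} X_b^n \to 0$.

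To build $E$ I would exploit the eigenstructure of $X_b$ already used above: for every $z \in \mathbb{D}$ the Cauchy kernel $k_z$ lies in $\mathcal{H}(b)$ and satisfies $X_b k_z = \bar{z}\, k_z$, hence $X_b^n k_z = \bar{z}^{\,n} k_z$. Fix a set $F \subset \mathbb{D}$ with an accumulation point in $\mathbb{D}$ and with $\sup_{z \in F}|z| < \alpha$ (for example $F = \{z : |z| < \alpha\}$), and put
$$ E = \operatorname{span}\{ S_b^m k_z : m \geq 0,\ z \in F \}. $$
By construction $S_b(E) \subseteq E$, and $E \supseteq \operatorname{span}\{k_z : z \in F\}$, which is dense in $\mathcal{H}(b)$ by Lemma~\ref{lemma 3.1}. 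For a fixed $x \in E$ only finitely many shift powers occur, say $m \leq M$; using $X_b S_b = I$ one gets $X_b^n S_b^m k_z = \bar{z}^{\,n-m} k_z$ for $n > m$, so that for $n > M$,
$$ \alpha^{-n} X_b^n x = \sum c_{m,z}\, \alpha^{-n}\, \bar{z}^{\,n-m} k_z, $$
and each summand has norm $|z|^{-m}(|z|/\alpha)^n \|k_z\|_b \to 0$ because $|z| < \alpha$. Hence $\alpha^{-n} X_b^n x \to 0$, all hypotheses of Theorem~\ref{thm 1.7} are met, and the result follows.

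I expect the only genuinely delicate point to be reconciling $S_b$-invariance with the decay estimate. A bare span of kernels $\{k_z : z \in F\}$ is \emph{not} $S_b$-invariant: one computes $S_b k_z = \bar{z}^{-1}(k_z - k_0)$, which drags in $k_0$, and then $S_b k_0$ is the monomial $z$. This is exactly why I close $E$ up under $S_b$ from the outset by including all powers $S_b^m k_z$; the density is then inherited from Lemma~\ref{lemma 3.1}, while the decay survives because iterating $X_b$ beyond the highest shift power reactivates the eigenvalue $\bar z$ with $|z| < \alpha$, killing the blow-up of $\alpha^{-n}$.
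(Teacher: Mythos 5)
Your proof is correct, and its skeleton is the same as the paper's: the same application of Shkarin's Theorem~\ref{thm 1.7} with $a=0$, $c=\|S_b\|^{-1}$, the same right inverse $S=S_b$, and the same observation that the condition $\beta^n S_b^n x\to 0$ is automatic from $\beta\|S_b\|<1$. Where you genuinely diverge is in the choice of the dense set $E$. The paper takes $E=\mathcal{P}$, the analytic polynomials, which are dense in $\mathcal{H}(b)$ precisely because $b$ is non-extreme (\cite[Theorem 23.13]{MR3617311}); with this choice everything collapses: $S_b$ obviously maps $\mathcal{P}$ into $\mathcal{P}$, and $X_b$ is nilpotent on each polynomial ($X_b^n p=0$ as soon as $n>\deg p$), so $\alpha^{-n}X_b^n p\to 0$ with no estimate at all. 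You instead take the $S_b$-orbit of the Cauchy kernels $k_z$, $z\in F$, $\sup_{z\in F}|z|<\alpha$, get density from Lemma~\ref{lemma 3.1}, and get decay from the relation $X_b^n S_b^m k_z=\bar z^{\,n-m}k_z$ together with $|z|<\alpha$. Your diagnosis that the bare kernel span fails to be $S_b$-invariant (because of $S_b k_z=\bar z^{-1}(k_z-k_0)$ and $S_bk_0=z$) and the fix of closing up under $S_b$ are exactly right; the only cosmetic blemish is the factor $|z|^{-m}$ in your norm bound, which is meaningless at $z=0$, but there the corresponding term vanishes identically for $n>m$, so nothing is lost. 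As for what each route buys: the paper's choice is shorter and requires no decay estimate, but it imports the polynomial-density theorem from the literature; yours is a bit longer but is self-contained given Lemma~\ref{lemma 3.1}, which the paper proves, and it reuses the eigenvector machinery of $X_b$ already exploited in the hypercyclicity results of this section.
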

\begin{proof}
We would like to apply Shkarin's Theorem $\ref{thm 1.7}$ with $ a =0$ , $ c = || S_b||^{-1}$ and $ E = \mathcal{P}$, with $ \mathcal{P}$ the set of analytic polynomials, dense in ${\mathcal{ H}}(b)$ \cite[Theorem 23.13]{MR3617311}.
Let 
$$ S_b : { \mathcal{P}} \longrightarrow { \mathcal{P}}; \hspace{0.2cm}  S_b p = zp.$$ 
It is clear that $ X_b S_b = I$.
For all $ 0 < \alpha < \beta < || S_b||^{-1}$, and for all $p \in \mathcal{P}$, we have on one hand $ { \alpha }^{ -n}X_b^n
p \rightarrow 0$ as $ n \rightarrow \infty,$ since from a certain rank $n_0 = deg (p) + 1$, $X_b^{n_0} p = 0$, and on
the other hand, $ || \beta^n S_b^n p    ||_b \leq ( \beta ||S_b|| )^n||p||_b \rightarrow 0$ as $ n \rightarrow \infty.$
Hence, using Theorem \ref{thm 1.7}, we conclude that $ \mathcal{G}$ is a dense $ G_{ \delta}$-set of ${\mathcal{ H}}(b)$. 
\end{proof}
\begin{remarque}
It remains the question of wether we can replace in the previous Theorem the lower bound 
$$ ||S_b|| < | \lambda|  \hspace{0.2 cm} by \hspace{0.2 cm} 1 < | \lambda |.$$
In other word, is the set 
$$\bigcap HC( \lambda X_b ; 1 <  | \lambda | <  \infty )$$
a dense $ G_{ \delta} $-set of ${\mathcal{ H}}(b) ?$ 
\end{remarque}
In the case where $ b $ is extreme, the operator $X_b$ is no longer hypercyclic, which shows a significant difference in the $ {\mathcal{ H}} (b) $ space theory following that $ \log (1- | b |) $ is integrable or not on $ \mathbb{ T} .$ The proof of this result requires basic facts on the spectrum of hypercyclic operators, which we now briefly recall.

Let $X$ be a complex Banach space, and let $T \in \mathcal{L}(X)$ be hypercyclic. Then $ \sigma_p ( T^* ) = \emptyset$ and every connected component of the spectrum of $T$ intersects the unit circle (see \cite[Page 11]{MR2533318}).
\begin{theorem}\label{thm 3.6}
 Let $b$ be an extreme point of the closed unit ball of $ H^{ \infty}$ and let
  $$\begin{array}{ccccc}
X_b  & : & { \mathcal{H}} (b) & \to & {\mathcal{ H}}(b) \\
 & & f & \mapsto & S^* f  
\end{array}.$$
 Then for every complex number $ \lambda, \lambda X_b$ is not hypercyclic.
\end{theorem}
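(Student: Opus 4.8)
The plan is to exploit the spectral obstruction to hypercyclicity recalled just before the statement: if $\lambda X_b$ were hypercyclic, then $\sigma_p((\lambda X_b)^*)$ would have to be empty. So the heart of the argument is to produce, in the extreme case, a nonzero eigenvector for the adjoint $(\lambda X_b)^* = \overline{\lambda}\, X_b^*$. The operator $X_b = {T_{\bar z}}_{|\mathcal H(b)}$ has adjoint $X_b^* = (S^*|_{\mathcal H(b)})^*$, which in the de Branges--Rovnyak theory is the restricted shift; it is a standard fact (the rank-one perturbation formula $X_b^* = S_b = S_{|\mathcal H(b)}$ up to a rank-one term, see \cite[Section 24.1]{MR3617311}) that $X_b^*$ always has $0$ as an eigenvalue when $b$ is extreme, because the natural candidate for an eigenvector at the point $0$ does not get killed the way it would in the non-extreme case. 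Concretely, first I would recall that $\lambda=0$ is trivial (the zero operator is not hypercyclic on an infinite-dimensional space), so I may assume $\lambda\neq 0$.

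The key step is then to show $\sigma_p(X_b^*)\neq\emptyset$, equivalently that $X_b^*$ has an eigenvalue; by $\sigma_p((\lambda X_b)^*)=\overline{\lambda}^{-1}\sigma_p(X_b^*)$ (scaling) this immediately gives $\sigma_p((\lambda X_b)^*)\neq\emptyset$ for every $\lambda\neq 0$, contradicting hypercyclicity. I would identify the adjoint explicitly: for the restricted backward shift $X_b$ on $\mathcal H(b)$, one has $X_b^* = S_b$ where $S_b$ is the compression/restriction of the forward shift, and the relation $X_b X_b^* = I - (\,\cdot\,,S^*b)_b\, S^*b$ type identity (the defect operators of the model) shows $\ker X_b^*$ is nontrivial precisely when $b$ is extreme. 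Alternatively, and perhaps more cleanly for a reader, I would argue at the point spectrum directly: the eigenvectors of $X_b$ at points of $\mathbb D$ are reproducing kernels, and in the extreme case the completeness/incompleteness dichotomy forces $X_b^*$ to have nonempty point spectrum. The most transparent route is to exhibit a single explicit eigenvector of $X_b^*$.

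The step I expect to be the main obstacle is pinning down the adjoint of $X_b$ and locating its eigenvector without reproving a chunk of the model theory. The cleanest device is to use that $b$ is \emph{extreme} iff $S^*b\in\mathcal H(b)$ behaves so that the vector $b - b(0)$, or equivalently a suitable normalization of $S^*b$, is a genuine eigenvector of $X_b^*$ at eigenvalue $\overline{b(0)}$ (for inner $b=\Theta$ this is the classical statement that $\Theta$ itself, suitably projected, is an eigenfunction of the compressed shift's adjoint). I would therefore verify by a direct reproducing-kernel computation that there exists a nonzero $g\in\mathcal H(b)$ and a scalar $\mu$ with $X_b^* g = \mu g$, using Theorem~\ref{thm 1.1} to compute the $\mathcal H(b)$ inner product and the fact that in the extreme case the relevant Cauchy kernels fail to lie in $\mathcal H(b)$ (noted right after Theorem~\ref{thm 2.1}), which is exactly the feature distinguishing this case from the non-extreme one.

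Finally I would assemble the pieces: fix $\lambda\in\mathbb C$. If $\lambda=0$ the operator is zero, hence not hypercyclic. If $\lambda\neq 0$, the eigenvector $g$ above gives $(\lambda X_b)^* g = \overline\lambda\,\mu\, g$, so $\overline\lambda\,\mu\in\sigma_p((\lambda X_b)^*)$ and $\sigma_p((\lambda X_b)^*)\neq\emptyset$. By the cited spectral property of hypercyclic operators \cite[Page 11]{MR2533318}, $\lambda X_b$ cannot be hypercyclic. This completes the proof and, as the text emphasizes, highlights the sharp contrast with the non-extreme case where $\lambda X_b$ was shown to be frequently hypercyclic for all $|\lambda|>1$.
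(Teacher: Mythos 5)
Your central claim --- that $\sigma_p(X_b^*)\neq\emptyset$ for \emph{every} extreme point $b$ --- is false, and this is precisely the difficulty the paper's proof has to work around. By \cite[Corollary 26.3]{MR3617311} (the fact quoted in the paper's proof),
$$\sigma_p(X_b^*)=\{\beta\in\mathbb D : b(\beta)=0\},$$
so the adjoint has an eigenvalue if and only if $b$ has a Blaschke factor. An extreme point need not vanish anywhere on $\mathbb D$: take $b=\Theta(z)=e^{-\frac{1+z}{1-z}}$, a singular inner function (inner, hence extreme). Then $X_b^*=S_\Theta$ is the compressed shift and $\sigma_p(S_\Theta)=Z(\Theta)=\emptyset$; the same failure occurs for every extreme outer $b$. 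Indeed the paper itself records this point in the remark following Proposition \ref{prop 3.7}: if $b$ has no Blaschke factor then $\sigma_p(X_b)=\emptyset$ (equivalently, via the conjugation $\Omega_b$, $\sigma_p(X_b^*)=\emptyset$), which is exactly why the point-spectrum criterion is necessary but not sufficient. Your proposed explicit eigenvectors also do not survive inspection: $0\in\sigma_p(X_b^*)$ only when $b(0)=0$, and $S^*b$ is not an eigenvector in general --- for inner $\Theta$ one computes $S_\Theta(S^*\Theta)=-\Theta(0)\,k_0^\Theta$, which is not proportional to $S^*\Theta$ unless $\Theta(0)=0$. (Note also that $X_b^*$ is not a restriction of the forward shift in the extreme case: $\mathcal H(b)$ is $S$-invariant if and only if $b$ is non-extreme; the correct general formula is the rank-one perturbation $X_b^*f=Sf-\langle f,S^*b\rangle_b\, b$.)

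Consequently your argument proves the theorem only when $b$ has a zero in $\mathbb D$, which is one half of the paper's case analysis. The paper completes the complementary case with a different spectral obstruction: if $b$ has no Blaschke factor, then \cite[Corollary 26.4]{MR3617311} gives $\sigma(X_b)\subset\mathbb T$, hence for $|\lambda|>1$ the spectrum $\sigma(\lambda X_b)$ lies on the circle $\{|z|=|\lambda|\}$ and misses $\mathbb T$ entirely; since every connected component of the spectrum of a hypercyclic operator must meet the unit circle, $\lambda X_b$ is not hypercyclic. The range $0<|\lambda|\le 1$ (which your eigenvector step was also meant to cover) is handled in the paper by the elementary observation that $\|\lambda X_b\|\le 1$ and a contraction is never hypercyclic. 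To repair your proof you need both obstructions, split according to whether $b$ has a Blaschke factor or not; the single eigenvector argument cannot be made to work uniformly across all extreme $b$.
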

\begin{proof}
For all $ | \lambda | \leq 1 , || \lambda X_b|| \leq 1 $ hence $ \lambda X_b $ is not hypercyclic.
Now take $\lambda \in \mathbb{C}$, $ | \lambda| > 1 $. By \cite[Corollary 26.3]{MR3617311}, we have 
$$ \sigma_p ( \bar{ \lambda} X_b^* ) = \bar{\lambda} \sigma_p ( X_b^*) = \{ \bar{\lambda} \beta : \beta \in \mathbb{D} \hspace{0.2 cm} and \hspace{0.2 cm} b( \beta) = 0 \} \hspace{0.2 cm}.$$\\
By the preceding equality, we notice that, if $b$ has a Blaschke factor then $ \sigma_p ( \bar{ \lambda} X_b^* )  \neq \emptyset$, and thus $ \lambda X_b $ is not hypercyclic. Now if $b$ does not admit a Blaschke factor we get from \cite[Corollary 26.4]{MR3617311} that $\sigma( X_b) \subset \mathbb{T}.$ That implies, since $| \lambda | > 1,$ $\sigma ( \lambda X_b ) \cap \mathbb{T}=  ( \lambda \sigma ( X_b )) \cap \mathbb{T} = \emptyset$. Therefore the connected component of $ \sigma(  \lambda X_b )  $ do not intersect the unit circle, hence $ \lambda X_b $ is not hypercyclic. We conclude that for every complex number $ \lambda, \hspace{0.1cm} \lambda X_b$ is not hypercyclic.\\ 
\end{proof}
In the case when $b$ is extreme, it has not been possible to reach the non-hypercyclicity of the Toeplitz operator $ {T_{ \bar{ \varphi}}}_{|{ \mathcal{ H}} (b)}$. However we give a necessary (but not sufficient) condition for such an operator to be non-hypercyclic.
\begin{proposition}\label{prop 3.7}
Let $b$ be an extreme point of the closed unit ball of $ H^{ \infty}$, and let $ \varphi \in H^{ \infty}$. If the Toeplitz operator $ {T_{ \bar{ \varphi}}}_{|{ \mathcal{ H}} (b)}$ is hypercyclic then ${ \sigma}_p ( {T_{ \bar{ \varphi}}}_{|{ \mathcal{ H}} (b)}) = \emptyset $.
\end{proposition}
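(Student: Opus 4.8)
The plan is to derive $\sigma_p(T)=\emptyset$, where $T:=T_{\overline\varphi}|_{\mathcal H(b)}$, from the spectral obstruction to hypercyclicity recalled just before the statement: if $T$ is hypercyclic then $\sigma_p(T^{*})=\emptyset$. Since in general there is no duality between $\sigma_p(T)$ and $\sigma_p(T^{*})$ --- for instance the backward shift $X_0=S^{*}$ on $H^2$ (the non-extreme case $b=0$, where $\lambda S^{*}$ is hypercyclic for $|\lambda|>1$) has $\sigma_p(X_0)=\mathbb D\neq\emptyset=\sigma_p(X_0^{*})$ --- the whole point is to show that \emph{for extreme} $b$ one does have $\sigma_p(T)=\overline{\sigma_p(T^{*})}$; granting this, $\sigma_p(T^{*})=\emptyset$ forces $\sigma_p(T)=\emptyset$.

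First I would recast the operator through the functional calculus of $X_b=T_{\overline z}|_{\mathcal H(b)}$. For $b$ extreme, $X_b$ is a completely non-unitary contraction, hence carries an $H^{\infty}$ functional calculus, and the identities $X_b^{k}=T_{\overline{z^{k}}}|_{\mathcal H(b)}$ give, for every $\varphi\in H^{\infty}$,
\[
T_{\overline\varphi}|_{\mathcal H(b)}=\widetilde\varphi(X_b),\qquad \widetilde\varphi(w):=\overline{\varphi(\overline w)},
\]
whence $T^{*}=\varphi(X_b^{*})$. Next I would describe the eigenspaces concretely: because $\mathcal H(b)$ is $T_{\overline\varphi}$-invariant, an eigenvector of $T$ for $\mu$ is exactly an element of $\mathcal H(b)$ lying in the $H^2$-kernel of $T_{\overline\varphi}-\mu=T_{\overline{\varphi-\overline\mu}}$, and this kernel equals the model space $K_{\psi_i}$ attached to the inner part $\psi_i$ of $\psi=\varphi-\overline\mu$. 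Thus $\ker(T-\mu)=\mathcal H(b)\cap K_{\psi_i}$, and $\mu\in\sigma_p(T)$ iff this intersection is nonzero.

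The heart of the argument is then a common-inner-factor criterion from the Sz.-Nagy--Foias model: realizing $X_b^{*}$ as the model operator with scalar characteristic function $b$ and $X_b$ as the one with characteristic function $\widetilde b(z)=\overline{b(\overline z)}$, one shows that $\ker(u(X_b^{*}))\neq\{0\}$ iff the inner part $u_i$ of $u\in H^{\infty}$ has a nonconstant common inner divisor with $b$, and dually $\ker(v(X_b))\neq\{0\}$ iff $v_i$ has one with $\widetilde b$. Applying this to $u=\varphi-\nu$ and $v=\widetilde\varphi-\mu$, and using that the involution $h\mapsto\widetilde h$ preserves inner functions and common inner divisors, sends $\widetilde\varphi-\mu$ to $\varphi-\overline\mu$ and $\widetilde b$ to $b$, one matches the condition ``$\mu\in\sigma_p(T)$'' with ``$\overline\mu\in\sigma_p(T^{*})$''; this is exactly $\sigma_p(T)=\overline{\sigma_p(T^{*})}$, and the proposition follows.

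The step I expect to be the main obstacle is precisely this model-theoretic input, namely the common-inner-factor description of $\ker(u(X_b))$ together with the identification of the characteristic function of $X_b^{*}$ as $b$; everything else is routine. In the inner case $b=\Theta$ it can be bypassed, since $T_{\overline\varphi}|_{K_\Theta}$ is a complex symmetric operator and complex symmetry gives $\sigma_p(T)=\overline{\sigma_p(T^{*})}$ at once; the difficulty is that for a general extreme, non-inner $b$ no such conjugation is at hand, so the symmetry of the point spectra must be extracted from the structure of $\mathcal H(b)$ itself. A more self-contained route, sufficient for the stated conclusion, would be to prove only the inclusion $\sigma_p(T)\subseteq\overline{\sigma_p(T^{*})}$ by manufacturing, out of an eigenvector $f\in\mathcal H(b)$ of $T$ for $\mu$, an explicit eigenvector of $T^{*}$ for $\overline\mu$.
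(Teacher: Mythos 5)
Your overall strategy is the right one --- reduce to the fact that a hypercyclic operator has empty adjoint point spectrum, and transfer eigenvectors of $T:=T_{\overline\varphi}|_{\mathcal H(b)}$ to eigenvectors of $T^{*}$ --- but the proof as written has a genuine gap exactly where you say it does: the ``common-inner-factor criterion'' for $\ker\bigl(u(X_b)\bigr)$ and $\ker\bigl(u(X_b^{*})\bigr)$, together with the identification of the characteristic function of $X_b^{*}$ with $b$, is asserted and never proved, and it is the entire content of the argument. Nothing in the proposal supplies this model-theoretic input for a general extreme $b$ (for non-inner characteristic functions the kernel theory of $u(T)$ in the Sz.-Nagy--Foias model is genuinely delicate), so the claimed identity $\sigma_p(T)=\overline{\sigma_p(T^{*})}$ remains unestablished. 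The fallback you mention at the end --- manufacture from an eigenvector $f$ of $T$ an explicit eigenvector of $T^{*}$ --- is indeed all that is needed, but you stop short of constructing it.

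Moreover, the premise that forces you into this detour is false: you write that for extreme, non-inner $b$ ``no such conjugation is at hand,'' whereas the canonical conjugation $\Omega_b$ on $\mathcal H(b)$ exists for \emph{every} extreme point $b$ of the unit ball of $H^{\infty}$ (see \cite[Section 26.6]{MR3617311} and Su\'arez \cite{MR1481605}), not only for inner $b$; and with respect to it $T$ is complex symmetric, $T^{*}=\Omega_b\,T\,\Omega_b$. This is precisely the paper's proof: if $Tf=\lambda f$ with $f\neq 0$, then $g:=\Omega_b f\neq 0$ satisfies $T^{*}g=\Omega_b T\Omega_b^{2}f=\Omega_b(Tf)=\overline{\lambda}\,g$, so $\sigma_p(T^{*})\neq\emptyset$, contradicting hypercyclicity. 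So the ``bypass'' you reserve for the inner case works verbatim in the general extreme case; if you replace the unproved model-theoretic lemma by the existence of $\Omega_b$ and the Su\'arez identity, your argument closes in three lines.
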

\begin{proof}
Assume that there exists $ \lambda \in  { \sigma}_p ( {T_{ \bar{ \varphi}}}_{|{ \mathcal{ H}} (b)})$. Then there exists $f \in { \mathcal{ H}} (b) $, $f \neq 0$ such that $ {T_{ \bar{ \varphi}}}_{|{ \mathcal{ H}} (b)} f = \lambda f. $
Let's take $ g := { \Omega}_b f$, with $ { \Omega}_b : { \mathcal{ H}} (b) \rightarrow { \mathcal{ H}} (b) $ the canonical conjugation  on ${ \mathcal{ H}} (b)$ (i.e. ${ \Omega}_b$ is antilinear, isometric, surjective and ${ \Omega}_b^2 = Id$, see \cite[Section 26.6]{MR3617311}). Moreover, it can be proved that 
$$  {T_{ \bar{ \varphi}}}_{|{ \mathcal{ H}} (b)}^*= { \Omega}_b   {T_{ \bar{ \varphi}}}_{|{ \mathcal{ H}} (b)}     { \Omega}_b,$$  
(see \cite{MR1481605}).

Using this equality, we obtain
\begin{eqnarray*}
 {T_{ \bar{ \varphi}}}_{|{ \mathcal{ H}} (b)}^* g &=& { \Omega}_b   {T_{ \bar{ \varphi}}}_{|{ \mathcal{ H}} (b)} { \Omega}_b  g \\
 &=& { \Omega}_b   {T_{ \bar{ \varphi}}}_{|{ \mathcal{ H}} (b)} { \Omega}_b^2 f\\
 &=&  { \Omega}_b   {T_{ \bar{ \varphi}}}_{|{ \mathcal{ H}} (b)} f \\
 &=& \lambda  { \Omega}_b f = \lambda g.  
\end{eqnarray*}
Hence, $ \lambda \in  { \sigma}_p ( {T_{ \bar{ \varphi}}}_{|{ \mathcal{ H}} (b)}^*) $ which implies that $ { \sigma}_p ( {T_{ \bar{ \varphi}}}_{|{ \mathcal{ H}} (b)}^*) \neq \emptyset$. It contradicts the hypercyclicity of the operator ${T_{ \bar{ \varphi}}}_{|{ \mathcal{ H}} (b)}$.
\end{proof}
\begin{remarque}
It turns out that this condition is not sufficient. Indeed, if $b$ has no Blaschke factor, then $\sigma_p (X_b)$ is empty, though by Theorem \ref{thm 3.6}, we know that $X_b$ is not hypercyclic.
\end{remarque}
\begin{remarque}\label{rem 3.8}
Notice that 
$${ \sigma}_p ( {T_{ \bar{ \varphi}}}_{|{ \mathcal{ H}} (b)}) = \emptyset  \Leftrightarrow  \forall \lambda \in \mathbb{C}, \hspace{0.1 cm}  K_{ ( \varphi - \lambda)_i} \cap { \mathcal{ H}} (b) = \{ 0 \}, $$
with $ \varphi - \lambda = ( \varphi - \lambda)_i ( \varphi - \lambda)_e $, and $( \varphi - \lambda)_i$ and $( \varphi - \lambda)_e$ are respectively the inner and outer part of $\varphi - \lambda$.\\
\end{remarque}
\begin{proof}
Let $\lambda  \in \mathbb{C}$. Then $ \bar{ \lambda} \in  { \sigma}_p ( {T_{ \bar{ \varphi}}}_{|{ \mathcal{ H}} (b)})$ if and only if there exists $f \in { \mathcal{ H}} (b) $, $f \neq 0$ such that 
$$ {T_{ \bar{ \varphi}}} f = \bar{ \lambda} f . $$
This equation is equivalent to
$$  {T_{ \overline{ \varphi - \lambda}}} f = 0  \Leftrightarrow {T_{ (\overline{ \varphi - \lambda})_e  (\overline{ \varphi - \lambda})_i}} f = 0  \Leftrightarrow {T_{ (\overline{ \varphi - \lambda})_i}} f = 0 ,$$
because when $a$ is outer $T_{ \bar{a}}$ is one-to-one.
The last equation is equivalent to 
$$ f \in  K_{ ( \varphi - \lambda)_i} \cap { \mathcal{ H}}(b),$$
where $K_{ ( \varphi - \lambda)_i}$ denotes the model space associated to $( \varphi - \lambda)_i$.
Thus $\bar{ \lambda} \in { \sigma}_p ( {T_{ \bar{ \varphi}}}_{|{ \mathcal{ H}} (b)})$ if and only if $ K_{ ( \varphi - \lambda)_i} \cap { \mathcal{ H}} (b) \neq \{ 0 \} .$
\end{proof}
In particular, if $b$ has a blaschke factor, then ${T_{ \bar{ \varphi}}}_{|{ \mathcal{ H}} (b)}$ is not hypercyclic, as shown in the following result.
\begin{corollaire}
If $ \lambda \in \mathbb{D}$ such that $ b( \lambda) = 0$ then  ${T_{ \bar{ \varphi}}}_{|{ \mathcal{ H}} (b)}$ is not hypercyclic
\end{corollaire}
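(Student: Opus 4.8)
The plan is to reduce the non-hypercyclicity to a statement about the point spectrum and then invoke the contrapositive of Proposition~\ref{prop 3.7}, which asserts that hypercyclicity of ${T_{\bar{\varphi}}}_{|{\mathcal{H}}(b)}$ forces ${\sigma}_p({T_{\bar{\varphi}}}_{|{\mathcal{H}}(b)}) = \emptyset$. Thus it suffices to exhibit a single nonzero eigenvector of ${T_{\bar{\varphi}}}_{|{\mathcal{H}}(b)}$ under the hypothesis $b(\lambda) = 0$.

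The key observation is that when $b(\lambda) = 0$ the reproducing kernel of ${\mathcal{H}}(b)$ at $\lambda$ collapses to the Cauchy kernel of $H^2$. Indeed,
$$ k_{\lambda}^b(z) = \frac{1 - \overline{b(\lambda)}\,b(z)}{1 - \bar{\lambda}z} = \frac{1}{1 - \bar{\lambda}z} = k_{\lambda}(z). $$
In particular $k_{\lambda} \in {\mathcal{H}}(b)$ and $k_{\lambda} \neq 0$, since $k_{\lambda}^b(\lambda) = (1 - |\lambda|^2)^{-1} > 0$. Next I would recall the eigenvector computation already carried out in the non-extreme hypercyclicity theorem above: for every $f \in H^2$ one has $\langle f, T_{\bar{\varphi}} k_{\lambda}\rangle_2 = \langle \varphi f, k_{\lambda}\rangle_2 = \varphi(\lambda) f(\lambda) = \langle f, \overline{\varphi(\lambda)}\, k_{\lambda}\rangle_2$, whence $T_{\bar{\varphi}} k_{\lambda} = \overline{\varphi(\lambda)}\, k_{\lambda}$ in $H^2$. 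Since ${\mathcal{H}}(b)$ is invariant under $T_{\bar{\varphi}}$ (recalled in Section~2) and $k_{\lambda} \in {\mathcal{H}}(b)$, this identity persists for the restricted operator, exhibiting $k_{\lambda}$ as a nonzero eigenvector of ${T_{\bar{\varphi}}}_{|{\mathcal{H}}(b)}$ with eigenvalue $\overline{\varphi(\lambda)}$. Consequently $\overline{\varphi(\lambda)} \in {\sigma}_p({T_{\bar{\varphi}}}_{|{\mathcal{H}}(b)})$, the point spectrum is nonempty, and Proposition~\ref{prop 3.7} forces ${T_{\bar{\varphi}}}_{|{\mathcal{H}}(b)}$ to be non-hypercyclic.

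There is essentially no serious obstacle: the statement is a direct corollary, and the only two points requiring care are that the Cauchy kernel $k_{\lambda}$ genuinely lies in ${\mathcal{H}}(b)$ when $b(\lambda)=0$ — immediate from the collapse of $k_{\lambda}^b$ above — and that the $H^2$ eigenvector identity transfers to the restriction, which is guaranteed by the $T_{\bar{\varphi}}$-invariance of ${\mathcal{H}}(b)$. As an alternative one could route through Remark~\ref{rem 3.8}: the vanishing $b(\lambda)=0$ means $b$ carries the Blaschke factor at $\lambda$, and for $\mu = \varphi(\lambda)$ the inner part $(\varphi - \mu)_i$ then shares that Blaschke factor, so that $k_{\lambda} \in K_{(\varphi - \mu)_i} \cap {\mathcal{H}}(b) \neq \{0\}$. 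I would nonetheless prefer the eigenvector argument, since it is shorter and handles the degenerate case of constant $\varphi$ uniformly.
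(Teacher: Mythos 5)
Your proof is correct and is essentially the paper's own argument: the paper likewise notes that $b(\lambda)=0$ forces $k_{\lambda}=k_{\lambda}^b\in\mathcal{H}(b)$, that $T_{\bar{\varphi}}k_{\lambda}=\overline{\varphi(\lambda)}\,k_{\lambda}$, and then invokes Proposition~\ref{prop 3.7} to conclude non-hypercyclicity. The only difference is that you spell out the verification details (nonvanishing of the kernel, invariance of $\mathcal{H}(b)$ under $T_{\bar{\varphi}}$) that the paper leaves implicit.
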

\begin{proof}
Suppose that $ \lambda \in \mathbb{D}$ such that $ b( { \lambda}) = 0$, then $ k_{ \lambda} = k_{ \lambda}^b \in { \mathcal{ H}} (b) .$ 
Moreover $ T_{ \bar{ \varphi}} k_{ \lambda} = \overline{ \varphi ( { \lambda})} k_{ \lambda}.$ Hence $\overline{ \varphi ( { \lambda})} \in  { \sigma}_p ( {T_{ \bar{ \varphi}}}_{|{ \mathcal{ H}} (b)}).$ Thus by Proposition \ref{prop 3.7} ${T_{ \bar{ \varphi}}}_{|{ \mathcal{ H}} (b)}$ is not hypercyclic.
\end{proof}
\begin{remarque}
If $b$ is extreme and outer, then ${ \sigma}_p ( {T_{ \bar{ \varphi}}}_{|{ \mathcal{ H}} (b)}) = \emptyset.$
\end{remarque}
 \begin{proof}
 Let $ \lambda \in \mathbb{C}$. If ${( \varphi - \lambda)}_i \equiv cte$ then $ K_{{( \varphi - \lambda)}_i} = \{ 0 \}$ ( see \cite[Theorem 18.2]{MR3617311}), which implies by Remark \ref{rem 3.8} that ${ \sigma}_p ( {T_{ \bar{ \varphi}}}_{|{ \mathcal{ H}} (b)}) = \emptyset$.\\

On the other hand if ${( \varphi - \lambda)}_i \neq cte$, and if $ f \in K_{{( \varphi - \lambda)}_i}$  then $f$ is not a cyclic vector for $ S^*$ (since $span \{ S^{*n} f : n \geq 0   \} \subset  K_{{( \varphi - \lambda)}_i}  \neq H^2 $ because $ S^{*} K_{( \varphi - \lambda)_i} \subset K_{( \varphi - \lambda)_i}$, see section \ref{section 2.1} and \cite[Section 1.10]{MR3617311}). Using \cite[Theorem 25.17]{MR3617311}, it implies  $ f \in K_{ \Theta}$ where $ \Theta = b_i$ is the inner part of $b$. But since $b$ is considered outer, then $ b_i \equiv cte$, thus $ K_{ \Theta} = \{  0 \}.$ Hence $K_{ ( \varphi - \lambda)_i} \cap { \mathcal{ H}} (b) = \{ 0 \}$, which also gives that ${ \sigma}_p ( {T_{ \bar{ \varphi}}}_{|{ \mathcal{ H}} (b)}) = \emptyset .$
 
  \end{proof}
\begin{remarque}
In the case where $b$ is extreme and outer, it would be interesting to know if ${T_{ \bar{ \varphi}}}_{|{ \mathcal{ H}} (b)}$ is hypercyclic or not.
\end{remarque}

 \bibliographystyle{plain}


\begin{thebibliography}{10}

\bibitem{MR0264385}
P.~R. Ahern and D.~N. Clark.
\newblock On functions orthogonal to invariant subspaces.
\newblock {\em Acta Math.}, 124:191--204, 1970.

\bibitem{MR3544864}
Anton Baranov and Andrei Lishanskii.
\newblock Hypercyclic {T}oeplitz operators.
\newblock {\em Results Math.}, 70(3-4):337--347, 2016.

\bibitem{MR2159459}
Fr\'{e}d\'{e}ric Bayart and Sophie Grivaux.
\newblock Hypercyclicity and unimodular point spectrum.
\newblock {\em J. Funct. Anal.}, 226(2):281--300, 2005.

\bibitem{MR2533318}
Fr\'ed\'eric Bayart and \'Etienne Matheron.
\newblock {\em Dynamics of linear operators}, volume 179 of {\em Cambridge
  Tracts in Mathematics}.
\newblock Cambridge University Press, Cambridge, 2009.

\bibitem{MR0160136}
Arlen Brown and P.~R. Halmos.
\newblock Algebraic properties of {T}oeplitz operators.
\newblock {\em J. Reine Angew. Math.}, 213:89--102, 1963/1964.

\bibitem{MR0244795}
Louis de~Branges and James Rovnyak.
\newblock Canonical models in quantum scattering theory.
\newblock In {\em Perturbation {T}heory and its {A}pplications in {Q}uantum
  {M}echanics ({P}roc. {A}dv. {S}em. {M}ath. {R}es. {C}enter, {U}.{S}. {A}rmy,
  {T}heoret. {C}hem. {I}nst., {U}niv. of {W}isconsin, {M}adison, {W}is.,
  1965)}, pages 295--392. Wiley, New York, 1966.

\bibitem{MR0215065}
Louis de~Branges and James Rovnyak.
\newblock {\em Square summable power series}.
\newblock Holt, Rinehart and Winston, New York-Toronto, Ont.-London, 1966.

\bibitem{MR3617311}
Emmanuel Fricain and Javad Mashreghi.
\newblock {\em The theory of {$ \mathcal{H}(b)$} spaces. {V}ol. 2}, volume~21
  of {\em New Mathematical Monographs}.
\newblock Cambridge University Press, Cambridge, 2016.

\bibitem{MR3203060}
Stephan~Ramon Garcia, William~T. Ross, and Warren~R. Wogen.
\newblock {$C^*$}-algebras generated by truncated {T}oeplitz operators.
\newblock In {\em Concrete operators, spectral theory, operators in harmonic
  analysis and approximation}, volume 236 of {\em Oper. Theory Adv. Appl.},
  pages 181--192. Birkh\"auser/Springer, Basel, 2014.

\bibitem{MR884467}
Robert~M. Gethner and Joel~H. Shapiro.
\newblock Universal vectors for operators on spaces of holomorphic functions.
\newblock {\em Proc. Amer. Math. Soc.}, 100(2):281--288, 1987.

\bibitem{MR1111569}
Gilles Godefroy and Joel~H. Shapiro.
\newblock Operators with dense, invariant, cyclic vector manifolds.
\newblock {\em J. Funct. Anal.}, 98(2):229--269, 1991.

\bibitem{MR2975340}
Sophie Grivaux.
\newblock A new class of frequently hypercyclic operators.
\newblock {\em Indiana Univ. Math. J.}, 60(4):1177--1201, 2011.

\bibitem{MR1133377}
B.~A. Lotto and D.~Sarason.
\newblock Multiplicative structure of de {B}ranges's spaces.
\newblock {\em Rev. Mat. Iberoamericana}, 7(2):183--220, 1991.

\bibitem{MR847333}
Donald Sarason.
\newblock Doubly shift-invariant spaces in {$H^2$}.
\newblock {\em J. Operator Theory}, 16(1):75--97, 1986.

\bibitem{MR1289670}
Donald Sarason.
\newblock {\em Sub-{H}ardy {H}ilbert spaces in the unit disk}, volume~10 of
  {\em University of Arkansas Lecture Notes in the Mathematical Sciences}.
\newblock John Wiley \& Sons, Inc., New York, 1994.
\newblock A Wiley-Interscience Publication.

\bibitem{MR2363975}
Donald Sarason.
\newblock Algebraic properties of truncated {T}oeplitz operators.
\newblock {\em Oper. Matrices}, 1(4):491--526, 2007.

\bibitem{MR2557957}
Stanislav Shkarin.
\newblock Remarks on common hypercyclic vectors.
\newblock {\em J. Funct. Anal.}, 258(1):132--160, 2010.

\bibitem{MR1481605}
Daniel Su\'{a}rez.
\newblock Backward shift invariant spaces in {$H^2$}.
\newblock {\em Indiana Univ. Math. J.}, 46(2):593--619, 1997.

\end{thebibliography}

\end{document}